\theoremstyle{plain} 
\newtheorem{thm}{Theorem}[section]
\newtheorem{lem}[thm]{Lemma}
\newtheorem{prop}[thm]{Proposition}
\theoremstyle{definition}
\newtheorem{ex}[thm]{Example}
\theoremstyle{remark}
\newtheorem{rem}[thm]{Remark}
\numberwithin{equation}{section}
\def\NN{{\mathbb N}}
\def\fp{{\mathfrak p}}
\def\fS{{\mathfrak S}}
\def\fm{{\mathfrak m}}
\def\cF{\mathcal F}
\def\cL{\mathcal L}
\def\Ass{\operatorname{Ass}}
\def\Image{\operatorname{Im}}
\def\Id{\operatorname{Id}}
\def\too{\longrightarrow}
\def\rank{\operatorname{rank}}
\def\Im{\operatorname{Im}}
\def\Ker{\operatorname{Ker}}
\def\Cok{\operatorname{Coker}}
\def\SYT{\operatorname{SYT}}
\def\Tab{\operatorname{Tab}}
\def\Tor{\operatorname{Tor}}
\def\ISp{I^{\rm Sp}}
\def\too{\longrightarrow}
\def\chara{\operatorname{char}}
\def\<{{\langle}}
\def\>{{\rangle}}
\def\sgn{\operatorname{sgn}}
\title[{Minimal free resolutions of the Specht ideals of  $(n-d,d)$}]{Elementary construction\\ of the minimal free resolution \\ of the Specht ideal of shape $(n-d,d)$}
\author{Kosuke Shibata}
\address{National Institute of Technology, Yonago College, Yonago,Tottori, 683-8502, JAPAN}
\email{shibata@yonago.kosen-ac.jp}
\author{Kohji Yanagawa}
\address{Department of Mathematics, Kansai University, Suita, Osaka 564-8680, Japan}
\email{yanagawa@kansai-u.ac.jp}
\thanks{The author is partially supported by JSPS Grant-in-Aid for Scientific Research (C) 19K03456.}
\keywords{Specht polynomial, Specht ideal, minimal free resolution, Cohen--Macaulay ring}
\subjclass{Primary 13D02, 13F99, 20C30}
\begin{document}
\maketitle

\begin{abstract}
Let $K$ be a field with $\chara(K)=0$. For a partition $\lambda$ of $n \in \NN$, let $I^{\rm Sp}_\lambda$ be the ideal of $R=K[x_1,\ldots,x_n]$ generated by all Specht polynomials of shape $\lambda$.  These ideals have been studied from several points of view (and under several names). Using advanced tools of the representation theory, Berkesch Zamaere et al. \cite{BGS} constructed a minimal free resolution  of $\ISp_{(n-d,d)}$ except differential maps.  
The present paper constructs the differential maps, and also gives an elementary proof of the result of \cite{BGS}. 
\end{abstract}

\section{Introduction}
Throughout this paper, $K$ is a field with $\chara(K)=0$.
For a positive integer $n$,  a {\it partition} of $n$ is a sequence $\lambda = (\lambda_1, \ldots, \lambda_l)$ of integers with $\lambda_1 \ge \lambda_2 \ge \cdots \ge \lambda_l \ge 1$ and $\sum_{i=1}^l \lambda_i =n$.  
The notation $\lambda \vdash n$ means that $\lambda$ is a partition of $n$. 
A partition $\lambda$ is represented by its Young diagram.  
The {\it Young tableau} of shape $\lambda$ is a bijective filling of the boxes of the Young diagram of  $\lambda$
by the integers in $\{1, \ldots, n\}$. 
Let  $\Tab (\lambda )$ be the set of tableaux of shape $\lambda$.
For a tableau $T \in \Tab (\lambda)$, its {\it Specht polynomial} $f_T \in R:=K[x_1, \ldots, x_n]$ is the product of all $x_i-x_j$ such that $i$ and $j$ are in the same column of $T$ and $j$ is in a lower position than $i$. 
For example, a partition $(4,2,1)$ is represented as $
\ytableausetup{centertableaux,boxsize=0.5em}\ydiagram {4,2,1}$,  and 
$$
T=\ytableausetup{mathmode, boxsize=1.3em}
\begin{ytableau}
4 & 2 & 1& 7   \\
3 & 5    \\
6 \\
\end{ytableau}
$$
is a tableau of shape $(4,2,1)$. In this case, its Specht polynomial  is $f_T=(x_4-x_3)(x_3-x_6)(x_4-x_6)(x_2-x_5).$

The {\it vector space} spanned by $\{ f_T \mid T \in \Tab(\lambda) \}$ has the natural $\fS_n$-module structure, where $\fS_n$ is the $n$-th symmetric group. In fact, this is isomorphic to the {\it Specht module} $V_\lambda$, which is  crucial in the representation theory of $\fS_n$.  
The {\it ideal}  
$$\ISp_\lambda =(f_T \mid T \in \Tab(\lambda)) \subset R$$ 
is called the {\it Specht ideal} of shape $\lambda$. 
These ideals have been studied from several points of view (and under several names and characterizations), so the present authors had sometimes ``re-proved" existing results in their preceding papers (\cite{SY1,Y}). The Cohen-Macaulay Specht ideals are classified as follows. (Assuming the results in \cite{LL}, the proof in \cite{Y} can be largely simplified. See also \cite{MW} for the characteristic free results.)

\begin{thm}[c.f. {\cite[Proposition~2.8 and Corollary~4.4]{Y}}]
\label{prev paper main}
$R/\ISp_\lambda$ is Cohen--Macaulay if and only if  one of the following conditions holds.  
$$
\text{(1)} \  \lambda =(n-d, 1, \ldots, 1), \qquad 
\text{(2)} \ \lambda =(n-d,d), \qquad  
\text{(3)}  \ \lambda =(d,d,1).  
$$
\end{thm}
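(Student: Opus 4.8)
The plan is to pass from the ideal to its zero set, do the dimension count there, and then treat the two implications by different machinery.

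First I would describe $V(\ISp_\lambda)\subseteq\AA^n$ combinatorially. For $p=(a_1,\dots,a_n)$ let $\mu(p)\vdash n$ be the partition recording the multiplicities of the distinct values among the $a_i$. A Specht polynomial $f_T$ is a product of column Vandermondes, so $f_T(p)\neq 0$ precisely when no column of $T$ repeats a value; by the Gale--Ryser theorem such a filling of shape $\lambda$ by ``colors'' of multiplicities $\mu(p)$ exists iff $\mu(p)\trianglelefteq\lambda$ in dominance order. Hence $p\in V(\ISp_\lambda)$ iff $\mu(p)\not\trianglelefteq\lambda$, so $V(\ISp_\lambda)$ is the diagonal subspace arrangement $\bigcup L_\pi$, the union over set partitions $\pi$ whose type is a minimal element of the up-set $\{\nu:\nu\not\trianglelefteq\lambda\}$, where $L_\pi=\{a_i=a_j : i\sim_\pi j\}$ has $\dim L_\pi=\#\pi$. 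An elementary dominance estimate gives $\max\{\ell(\nu):\nu\not\trianglelefteq\lambda\}=n-\lambda_1$ (the value is attained at $\nu=(\lambda_1+1,1^{\,n-\lambda_1-1})$, while $\ell(\nu)>n-\lambda_1$ forces $\nu\trianglelefteq\lambda$), so $\dim R/\ISp_\lambda=n-\lambda_1$, i.e.\ $\height\ISp_\lambda=\lambda_1$. This is the numerical input against which both implications are measured.

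For the ``if'' direction I would prove $\depth R/\ISp_\lambda=n-\lambda_1$ for the three families. For $\lambda=(n-d,d)$ this is exactly what the rest of the paper yields: once the minimal free resolution is in hand, Auslander--Buchsbaum gives $\pd R/\ISp_{(n-d,d)}=n-d=\height$, hence $\depth=d=\dim$ and Cohen--Macaulayness. For the hook $(n-d,1^d)$ the arrangement is the ``at most $d$ distinct values'' locus, and for $(d,d,1)$ it is a single $\fS_n$-orbit of $(d+1)$-dimensional diagonal planes; in both cases I would exhibit a term order under which $\init(\ISp_\lambda)$ is squarefree with shellable Stanley--Reisner complex (equivalently, check that the intersection lattice of the arrangement is Cohen--Macaulay), and then lift through $\depth R/\ISp_\lambda\ge\depth R/\init(\ISp_\lambda)=\dim$.

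For the ``only if'' direction I would argue contrapositively by induction on $n$, reducing to the minimal ``forbidden'' shapes, which a direct enumeration shows first occur at $n=6$, namely $(3,2,1)$, $(2,2,2)$, and $(2,2,1,1)$. For these base cases I would compute $H^i_\m(R/\ISp_\lambda)$ directly (or detect the failure of Reisner's criterion on the intersection lattice) and exhibit nonvanishing strictly below the top. The inductive step needs a transfer principle: the specialization $x_n\mapsto x_{n-1}$ sends $\ISp_\lambda$ to $\sum_{\mu\lessdot\lambda}\ISp_\mu$ in $K[x_1,\dots,x_{n-1}]$ (branching), and I would use it to propagate non-Cohen--Macaulayness upward from the base cases. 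The cheap obstructions are unavailable here: the arrangements are equidimensional and, as already the component adjacency for $(2,2,2)$ shows, connected in codimension one, so Hartshorne's connectedness theorem does not apply.

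The main obstacle is precisely this ``only if'' direction. Two difficulties compound. Controlling $\ISp_\lambda$ under $x_n\mapsto x_{n-1}$ is delicate because the image is a \emph{sum} of Specht ideals rather than a single one, so one must track colons, saturations, and the possibility that some $\ISp_\lambda$ fails to be radical; and the base-case input is a genuine middle-dimensional local cohomology computation, not a connectivity argument. Producing a uniform reduction that isolates exactly the $(3,2,1)$, $(2,2,2)$, and $(2,2,1,1)$ obstructions — and certifies that nothing else outside families (1)--(3) survives — is where the real work lies.
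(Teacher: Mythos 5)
First, a point of comparison that matters for your whole strategy: this paper never proves Theorem~\ref{prev paper main} at all. It is imported from \cite{Y} (see \cite{MW} for the characteristic-free version), so there is no internal proof to match your attempt against, and your proposal has to stand on its own. Its first step does stand: the Gale--Ryser description $V(\ISp_\lambda)=\{p \mid \mu(p)\not\trianglelefteq\lambda\}$ and the resulting count $\dim R/\ISp_\lambda=n-\lambda_1$, $\height \ISp_\lambda=\lambda_1$, are correct. The fatal gap is the circularity in your treatment of case (2). You propose to deduce Cohen--Macaulayness of $R/\ISp_{(n-d,d)}$ from the resolution constructed ``in the rest of the paper'' via Auslander--Buchsbaum. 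But the paper's proof that $\cF^{(n-d,d)}_\bullet$ is a resolution is not independent of that fact: Lemma~\ref{cancel}, on which \eqref{cancel2} and all of \S4 rest, obtains the regularity bound (the statement $(*)$) from the Hilbert series precisely \emph{because} $R/\ISp_{(n-d,d)}$ is Cohen--Macaulay --- for a non-CM quotient the degree of the Hilbert-series numerator does not control the regularity. Cohen--Macaulayness is an input to Theorem~\ref{main}, quoted from \cite{Y}; using Theorem~\ref{main} to prove it is a circle.

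The remaining parts are plans rather than proofs, and two of the planned steps fail as stated. (a) For the families $(n-d,1^d)$ and $(d,d,1)$ with $d\ge 2$ there is \emph{no} term order making the initial ideal squarefree: every monomial occurring in a Specht polynomial of these shapes has some exponent equal to $2$ (the column of height $\ge 3$ contributes exponent vectors that are permutations of $(0,1,2,\dots)$), and since the lowest-degree piece of $\ISp_\lambda$ is exactly the span of the $f_T$, every minimal generator of $\init(\ISp_\lambda)$ in that degree is non-squarefree, for any term order. (b) Your transfer principle for the ``only if'' direction rests on the specialization $x_n\mapsto x_{n-1}$, i.e.\ on passing to $R/(\ISp_\lambda+(x_{n-1}-x_n))$. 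But $x_{n-1}-x_n$ vanishes identically on every component of $V(\ISp_\lambda)$ whose blocks contain $\{n-1,n\}$ together, so it is a zerodivisor on $R/\ISp_\lambda$; Cohen--Macaulayness is not inherited by quotients modulo zerodivisors, so non-CM-ness cannot be propagated upward through this specialization without substantial extra input (this is exactly where \cite{Y}, and \cite{MW} with principal radical systems, must work hard). Your enumeration of the minimal bad shapes $(3,2,1)$, $(2,2,2)$, $(2,2,1,1)$ at $n=6$ is correct, and your observation that Hartshorne connectedness is unavailable is also correct, but with the base-case local cohomology uncomputed and the transfer mechanism unsound, the ``only if'' direction --- which you yourself flag as ``where the real work lies'' --- remains entirely open.
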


The minimal free resolution of $\ISp_\lambda$ in the  case (1) is treated in the joint paper \cite{WY} with J. Watanabe.  In the cases (2) and (3),  C. Berkesch Zamaere et al.  \cite{BGS} determined the $\fS_n$-module structure of  $ \Tor_i^R(K, R/\ISp_\lambda)$, especially  they gave the graded Betti numbers 
$$\beta_{i,j}(R/\ISp_\lambda) :=\dim_K [\Tor_i^R(K, R/\ISp_\lambda)]_j.$$
A crucial point of their result is that 
\begin{itemize}
\item[($*$)] $\beta_{i,j}(\ISp_{(n-d,d)})\ne 0$ implies $j=d+i$ (resp. $j=d+i+1$) if $i \le n-2d$ (resp. $i > n-2d$). 
\end{itemize}

In the previous paper \cite{SY2}, the authors constructed  minimal free resolutions of  $\ISp_{(n-2,2)}$ and  $\ISp_{(d,d,1)}$ explicitly using Specht modules.  In these cases (and the case $\ISp_{(n-1,1)}$), the fact $(*)$ is easy, and this is a key point of \cite{SY2}.  Extending this result, the present paper constructs a minimal free resolution of $\ISp_{(n-d,d)}$ for general $d$.  
 
Contrary to \cite{BGS},  the present paper gives the differential maps of our resolution explicitly. 
Moreover,  while \cite{BGS} uses  highly  advanced tools of the representation theory (rational Cherednik algebras, Jack polynomials, etc), we only use  the basic theory of Specht modules.   
Our proof is self-contained, in other words, we give a new  and elementary proof  of $(*)$.  

For example, the minimal free resolution $\cF_\bullet^{(4,3)}$ of $\ISp_{(4,3)}$ is as follows. 
 $$0 \too V_{\ytableausetup{boxsize=0.25em} 
\begin{ytableau}
{}&\\
\\
\\
\\
\\
\\
\end{ytableau}}\otimes_K R(-7) \stackrel{\partial_{3}}{\too} 
V_{\ytableausetup{boxsize=0.25em} 
\begin{ytableau}
{} & \\
 {} & \\
\\
\\
\\
\end{ytableau}} \otimes_K R(-6) \stackrel{\partial_{2}}
{\too} 
V_{\ytableausetup{boxsize=0.25em} 
\begin{ytableau}
{} & {} & \\
 {} & {} & \\
\\
\end{ytableau}} \otimes_K R(-4) \qquad \qquad  \qquad \qquad \quad $$
\begin{equation}\label{F^(4,2)} \qquad \qquad \qquad \qquad \qquad \qquad \qquad 
 \stackrel{\partial_{1}}{\too} 
V_{\ytableausetup{boxsize=0.2em} 
\begin{ytableau}
{} & {} & {} & \\
 {} & {} & \\
\end{ytableau}} \otimes_K R(-3) \stackrel{\partial_{0}}{\too} \ISp_{(4,3)} 
\too 0. 
\end{equation}
 Here the degree of an element in $V_\lambda$ is 0, so we have $[V_\lambda \otimes R(-i)]_j \cong (R_{j-i})^{\oplus \dim V_\lambda}$.  
The differential maps of $\cF_\bullet^{(4,3)}$ are precisely described in Examples~\ref{(4,3)} below. 
In general, $\cF_\bullet^{(n-d,d)}$ consists of the $d$-linear strand and the $(d+1)$-linear strand (more precisely, $\cF_\bullet^{(n-1,1)}$, which is 1-linear, is the exception). 
The former ends at the $(n-2d)$-th step of the resolution, and the latter  starts from the $(n-2d+1)$-th step (this is what $(*)$ says).  
The Young diagrams in the $d$-linear strand are obtained by successively moving boxes from the top right to the bottom left, and those in the  $(d+1)$-linear strand are obtained by successively moving boxes from the second row to the bottom left.

The composition of this paper is as follows. \S2 collects basic definitions and preliminary results. 
In \S3, we give the construction of our resolution $\cF_\bullet^{(n-d,d)}$, and   \S4 mainly treats the proof of  the statement $(*)$, from which we can easily show that  $\cF_\bullet^{(n-d,d)}$ is a minimal free resolution. 
So if one can assume $(*)$, which is given by \cite{BGS}, one can skip the most part of this section. 
Our proof is an induction on $n$ using the branching rule of Specht modules.  

\section{Preliminaries}
In this section, we briefly explain Specht modules and related notions. See \cite[Chapter 2]{Sa} for details.

For a partition $\lambda=(\lambda_1,\ldots , \lambda_l)$, we sometimes use ``exponential notation''. For example,  $(4,3^2,2,1^3)$ means $(4,3,3,2,1,1,1)$.  For convenience, a partition like  $(4,3)$ sometimes denoted by $(4,3,1^0)$. 
If $\lambda = (\lambda_1, \ldots, \lambda_l)$, then $\Tab(\lambda)$ can be simply written as $\Tab(\lambda_1, \ldots, \lambda_l)$. 
Given any set $A$, let $S(A)$ be the set of all permutations of $A$.
If a tableau $T\in \Tab(\lambda)$ with $\lambda \vdash n$ has columns $C_1,\ldots ,C_{\lambda_1} \subset [n]:=\{1,2, \ldots, n\}$, then we call  
$C(T):=S(C_1)\times \cdots \times S(C_{\lambda_1})$ the {\it column-stabilizer} of $T$.
For $T, T'\in \Tab(\lambda)$, $T$ and $T'$ are {\it row equivalent}, if the corresponding rows of $T$ and $T'$ contain the same elements. For $T\in \Tab(\lambda)$, the {\it tabloid} $\mbox{\boldmath $\{$}T\mbox{\boldmath $\}$}$ of $T$ is defined by 
$$\mbox{\boldmath $\{$}T\mbox{\boldmath $\}$}:=\{T'\in \Tab(\lambda)\, |\, T \, \, {\rm and} \, \, T' \, \, {\rm are\, \,  row \, \, equivalent}\},$$
 and the {\it polytabloid} of $T$ is defined by
$$e(T):=\displaystyle \sum_{\sigma\in C(T)}\sgn(\sigma) \, \sigma \mbox{\boldmath $\{$}T\mbox{\boldmath $\}$}.$$

The vector space 
$$V_\lambda:=\displaystyle \sum_{T\in \Tab(\lambda)}Ke(T)$$
becomes an $\mathfrak{S}_n$-module in the natural way, and it is called the {\it Specht module} of $\lambda$.
The Specht modules $V_\lambda$ are irreducible, and $\{ V_\lambda \mid \lambda \vdash n\}$ forms a complete list of  the irreducible representations of $\mathfrak{S}_n$ (recall that $\chara(K)=0$ now).

In the previous section, we defined the Specht polynomial $f_T \in R=K[x_1, \ldots, x_n]$. 
Since $\mathfrak{S}_n$ acts on $R$, the vector subspace
$$\sum_{T\in \Tab(\lambda)} Kf_T$$
is also an $\mathfrak{S}_n$-module.
Moreover, the map
\begin{equation}\label{Isom}
V_{\lambda} \too \sum_{T\in \Tab(\lambda)} Kf_T, \qquad \quad e(T) \longmapsto f_T .
\end{equation}
is well-defined, and gives an isomorphism as $\mathfrak{S}_n$-modules.

The set $\{e(T)\, |\, T\in \Tab(\lambda)\}$ is linearly dependent. 
For linear relations among them, a {\it Garnir relation} is fundamental. See \cite[\S 2.6]{Sa}. 
We say a tableau $T$ is {\it standard}, if all columns (resp. rows) are increasing from top to bottom (resp. from left to right). Let $\SYT(\lambda)$ be the set of standard Young tableaux of shape $\lambda$. It is a classical result that 
$\{e(T)\, |\, T\in \SYT(\lambda)\}$ is a basis of $V_\lambda$   (c.f. \cite[Theorem 2.6.5]{Sa}). 



\begin{lem}\label{cancel} 
We have that $\beta_{i,j}(\ISp_{(n-d,d)}) \ne 0$ implies $j=d+i, d+i+1$, and  
$$\beta_{i, d+i}(\ISp_{(n-d,d)})-\beta_{i-1,d+i}(\ISp_{(n-d,d)}) = 
\begin{cases}
\dim_K  V_{(n-d-i, d , 1^{i})} & (i \le n-2d), \\
0 & (i=n-2d+1), \\
-\dim_K V_{(d-1, n-d-i+1 , 1^i)} & (i > n-2d+1), 
\end{cases}
$$
\end{lem}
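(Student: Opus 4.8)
The plan is to deduce the whole statement from a computation of the $\fS_n$-equivariant graded Euler characteristic of the minimal free resolution of $\ISp_{(n-d,d)}$, carried out by induction on $n$ with $d$ fixed. Working in the representation ring of $\fS_n$, set
$$\chi(t):=\sum_{i,j}(-1)^i\bigl[\Tor_i^R(K,\ISp_{(n-d,d)})\bigr]_j\,t^j,$$
a polynomial in $t$ with virtual-representation coefficients. Resolving by the free modules $R(-j)\otimes[\Tor_i]_j$ gives $\chi(t)=H(t)\cdot\sum_m(-1)^m[\bigwedge^m W]\,t^m$, where $W$ is the permutation representation spanned by $x_1,\dots,x_n$ and $H(t)$ is the equivariant Hilbert series of $\ISp_{(n-d,d)}$. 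The first observation is that, \emph{granting} $(*)$, in each internal degree $d+i$ only the homological positions $i$ and $i-1$ can contribute (the $d$-strand at position $i$ and the $(d+1)$-strand at position $i-1$), so the coefficient of $t^{d+i}$ in $\chi(t)$ equals $(-1)^i\bigl([\Tor_i]_{d+i}-[\Tor_{i-1}]_{d+i}\bigr)$. Taking dimensions, the asserted formula is therefore \emph{equivalent} to the identity that the coefficient of $t^{d+i}$ in $\chi(t)$ is $(-1)^i[V_{(n-d-i,d,1^i)}]$ for $i\le n-2d$, is $0$ for $i=n-2d+1$, and is $(-1)^{i+1}[V_{(d-1,n-d-i+1,1^i)}]$ for $i>n-2d+1$. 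Thus it suffices to identify every coefficient of $\chi(t)$ with $\pm$ a single Specht module and to establish $(*)$; I would do both simultaneously by induction.

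For the inductive step I would exploit that $\ISp_{(n-d,d)}$ is a nonzero ideal in the domain $R$, hence torsion-free, so the linear form $x_n$ is a nonzerodivisor on it. Reducing a minimal free $R$-resolution modulo $x_n$ then produces a minimal free resolution over $R':=R/(x_n)=K[x_1,\dots,x_{n-1}]$ of $\ISp_{(n-d,d)}/x_n\ISp_{(n-d,d)}$, yielding canonical isomorphisms $\Tor_i^R(K,\ISp_{(n-d,d)})\cong\Tor_i^{R'}(K,\ISp_{(n-d,d)}/x_n\ISp_{(n-d,d)})$ that respect the grading and are $\fS_{n-1}$-equivariant, $\fS_{n-1}$ being the stabilizer of $n$ (which fixes $x_n$). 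Hence $\chi(t)\!\downarrow_{\fS_{n-1}}$ is the corresponding Euler characteristic over $R'$. To compute the latter I would use the exact sequence
$$0\too (\ISp_{(n-d,d)}\cap x_nR)/x_n\ISp_{(n-d,d)}\too \ISp_{(n-d,d)}/x_n\ISp_{(n-d,d)}\too \overline{\ISp}\too 0,$$
where $\overline{\ISp}$ is the image of $\ISp_{(n-d,d)}$ in $R'$, identify $\overline{\ISp}$ with $\ISp_{(n-1-d,d)}$ and the torsion term through the colon ideal $(\ISp_{(n-d,d)}:x_n)$, and then invoke additivity of the Euler characteristic together with the inductive hypothesis, which determines $\chi$ for $\ISp_{(n-1-d,d)}$ over $R'$.

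It then remains to lift this $\fS_{n-1}$-level information to the $\fS_n$-statement. Here I would feed in the branching rule $V_\mu\!\downarrow_{\fS_{n-1}}=\bigoplus_{\mu^-}V_{\mu^-}$, the sum running over removable corners $\mu^-$ of $\mu$, and match it against the candidate coefficients: since $(n-d-i,d,1^i)$ and $(d-1,n-d-i+1,1^i)$ have completely controlled sets of removable corners, the restriction predicted by the induction should match exactly one sign-coherent combination of Specht modules lying in the correct internal degree. Simultaneously I would prove $(*)$: by induction the restricted $\Tor$ is already concentrated in the two strands, and minimality of the resolution (the differentials carry no degree-preserving part) together with the internal-degree bookkeeping above forbids any Betti number outside degrees $d+i$ and $d+i+1$; the meeting point $i=n-2d+1$, where the $d$-strand terminates and the $(d+1)$-strand begins, is where the net contribution cancels to $0$ and must be checked separately.

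The hard part will be the lift from $\fS_{n-1}$ to $\fS_n$: restriction is not injective on the representation ring, so the $\fS_{n-1}$-character computed above does not by itself determine $\chi(t)$. What rescues the argument is the rigidity imposed by $(*)$ (only two internal degrees per homological step), combined with precise control of the image ideal $\overline{\ISp}=\ISp_{(n-1-d,d)}$ and of the colon/torsion term $(\ISp_{(n-d,d)}:x_n)$ in the short exact sequence; pinning these down is the technical crux, since reducing a generator $f_T$ modulo $x_n$ behaves differently according as $n$ sits in a length-two or a length-one column. Establishing $(*)$ itself---ensuring that no Betti numbers leak outside the two linear strands---is the other delicate point, and is exactly the part the introduction flags as the bulk of \S4; once it is in place, the difference formula follows from the equivariant Euler characteristic essentially formally.
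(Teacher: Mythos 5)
Your approach is not the paper's, and it contains a step that is simply false: the identification of the image ideal $\overline{\ISp}$ with $\ISp_{(n-1-d,d)}$, on which your whole induction rests. Under $x_n\mapsto 0$, a Specht polynomial $f_T$ of shape $(n-d,d)$ whose tableau has $n$ in a height-two column (say below $a$) maps to $x_a\prod_{k=2}^{d}(x_{a_k}-x_{b_k})$, which does not lie in $\ISp_{(n-1-d,d)}$: the degree-$d$ part of $\ISp_{(n-1-d,d)}$ is spanned by the Specht polynomials of shape $(n-1-d,d)$, all invariant under the translation $x_i\mapsto x_i+c$, whereas $x_a\prod_{k\ge 2}(x_{a_k}-x_{b_k})$ is not. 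Concretely, for $d=1$ the image of $\ISp_{(n-1,1)}$ in $R'=K[x_1,\dots,x_{n-1}]$ is the whole maximal ideal $(x_1,\dots,x_{n-1})$, not $\ISp_{(n-2,1)}$. So your short exact sequence does not reduce to the inductive case, and both the cokernel and the colon term would have to be recomputed from scratch. There are two further gaps: first, your argument for the degree concentration (the first assertion, or $(*)$) relies on ``internal-degree bookkeeping,'' but Betti numbers, unlike Euler characteristics, are not additive along your exact sequences --- one must control the connecting maps in the long exact sequence of $\Tor$, which you never do; second, as you yourself admit, restriction to $\fS_{n-1}$ does not determine the $\fS_n$-structure, and no mechanism for the lift is supplied. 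Note also that the lemma is a statement about dimensions only, so the equivariant apparatus is unnecessary overhead.

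The paper's own proof is short and purely numerical. It takes as known the Hilbert series $H(R/\ISp_{(n-d,d)},t)=(1+h_1t+\cdots+h_dt^d)/(1-t)^d$, with $h_i=\binom{n-d+i-1}{i}$ for $i\le d-1$ and $h_d=\binom{n-1}{d-2}$, from \cite{BGS} (or \cite{SY1}), together with the Cohen--Macaulayness of $R/\ISp_{(n-d,d)}$ (Theorem~\ref{prev paper main}). Cohen--Macaulayness plus the degree of the numerator give $\reg (R/\ISp_{(n-d,d)})=d$, hence $\reg (\ISp_{(n-d,d)})=d+1$; combined with generation in degree $d$, this is exactly the first assertion. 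Then the identity $\sum_{i,j}(-1)^i\beta_{i,j}(\ISp_{(n-d,d)})t^j=1-(1-t)^{n-d}H(R/\ISp_{(n-d,d)},t)$ of \cite[Lemma~4.1.13]{BH} is compared degree by degree: by the first assertion the coefficient of $t^{d+i}$ on the left is $(-1)^i(\beta_{i,d+i}-\beta_{i-1,d+i})$, and an explicit binomial computation of the right-hand side, matched against the hook length formula, yields $\pm\dim_K V_\lambda$ for the stated shapes. If you wish to salvage your induction, be aware that it essentially amounts to re-deriving this Hilbert series; that is what \cite{SY1} does, with a correct and more delicate treatment of how Specht ideals behave under specialization, and it is precisely the input the paper simply quotes.
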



\begin{proof}
In \cite{BGS} (see also \cite{SY1}), the Hilbert series of $R/\ISp_{(n-d,d)}$ is given by 
$$H(R/\ISp_{(n-d,d)}, t)= \frac{1+h_1 t +h_2 t^2+\cdots + h_d t^d }{(1-t)^d}$$
with 
$$
h_i = \begin{cases}
\binom{n-d+i-1}{i} & \text{if $1 \le i \le d-1$,} \\
\binom{n-1}{d-2} &  \text{if $ i = d$.}
\end{cases}
$$
Since $R/\ISp_{(n-d,d)}$ is Cohen--Macaulay,  the above Hilbert series states that the regularity of $R/\ISp_{(n-d,d)}$ is $d$ (cf., \cite[Corollary B.4.1]{Va}), that is, the regularity of $\ISp_{(n-d,d)}$ is $d+1$. 
Since $\ISp_{(n-d,d)}$ is generated by degree $d$ elements, we have the first assertion of the lemma.  

By \cite[Lemma~4.1.13]{BH}, we have 
\begin{eqnarray*}
\frac{\sum_{i,j}(-1)^i\beta_{i,j}(\ISp_{(n-d,d)})t^j}{(1-t)^n}
=\frac{1}{(1-t)^n}-H(R/\ISp_{(n-d,d)},t),
\end{eqnarray*}
and hence
\begin{eqnarray*}
\sum_{i,j}(-1)^i\beta_{i,j}(\ISp_{(n-d,d)})t^j
=1-(1-t)^{n-d}H(R/\ISp_{(n-d,d)},t). 
\end{eqnarray*}
Comparing the $(d+i)$--degree parts of the both sides of the above equation, we see that
\begin{eqnarray*}
&&(-1)^i (\beta_{i, d+i}(\ISp_{(n-d,d)})-\beta_{i-1,d+i}(\ISp_{(n-d,d)}) ) \\
&=& \left( \sum^{d-1}_{k=0} (-1)^{d+i-k-1}\binom{n-d}{d+i-k}h_k \right)+ (-1)^{i-1}\binom{n-d}{i}h_d\\
&=&\left( \sum^{d-1}_{k=0} (-1)^{d+i-k-1}\binom{n-d}{d+i-k}\binom{n-d+k-1}{k}\right) + (-1)^{i-1}\binom{n-d}{i}\binom{n-1}{d-2}\\
&=& (-1)^i  \binom{n-1}{d+i}\binom{d+i-1}{i}+ (-1)^{i-1}\binom{n-d}{i}\binom{n-1}{d-2}\\
&=&(-1)^{i}\frac{n!(n-2d-i+1)}{(n-d-i)!(d-1)!i!(n-d+1)(d+i)},
\end{eqnarray*}
where the third equality follows from the equation (5.5) in the proof of \cite[Proposition~5.3.14]{V}.

By the hook formula (\cite[Theorem~3.10.2]{Sa}), we have 
\begin{eqnarray*}
\dim_K  V_{(n-d-i, d , 1^{i})} =\frac{n!(n-2d-i+1)}{(n-d-i)!(d-1)!i!(n-d+1)(d+i)}
\end{eqnarray*}
for $i\leq n-2d$, and
\begin{eqnarray*}
\dim_K V_{(d-1, n-d-i+1 , 1^i)}  =\frac{n!(2d-n+i-1)}{(n-d-i)!(d-1)!i!(n-d+1)(d+i)}
\end{eqnarray*}
for $i > n-2d+1$.
Now the second assertion is clear. 
\end{proof}

\begin{rem}
The above computation of the Hilbert series is also treated in the authors' previous paper \cite{SY1}. While \cite{SY1} gives extra results (e.g. the characteristic free-ness of the Hilbert series, and the ``branching rule" for Specht ideals, which is a prototype of the argument in \S4 of the present paper), the computation in \cite{BGS} is much more direct and quicker than \cite{SY1}. 
\end{rem}

\section{Construction of the free resolution}
For  the Specht ideal  $\ISp_{(n-d,d)}$, we define the chain complex 
$$
\cF_\bullet^{(n-d,d)}:0 \too F_{n-d-1} \stackrel{\partial_{n-d-1}}{\too}  F_{n-d-2} \stackrel{\partial_{n-d-2}}{\too}  \cdots \stackrel{\partial_2}{\too}  F_1 \stackrel{\partial_1}{\too} F_0 \too 0
$$
of graded free $R$-modules as follows (if there is no danger of confusion, we simply denote $\cF_i^{(n-d,d)}$ as $F_i$). 
Here  
$$
F_i = V_{(n-d-i, d , 1^{i})} \otimes_K R(-d-i)
$$
for $0 \le i \le n-2d$, and
$$
F_i = V_{(d-1, n-d-i , 1^{i+1})} \otimes_K R(-d-i-1)
$$
for $n-2d+1\leq i \leq n-d-1$.


\begin{rem}
The second assertion of Lemma~\ref{cancel} states that 
\begin{equation}\label{cancel2}
\beta_{i, d+i}(\ISp_{(n-d,d)})-\beta_{i-1,d+i}(\ISp_{(n-d,d)}) = 
\begin{cases}
\rank \cF^{(n-d,d)}_i & (i \le n-2d), \\
0 & (i=n-2d+1), \\
-\rank \cF^{(n-d,d)}_{i-1} & (i > n-2d+1), 
\end{cases}
\end{equation}
where $\rank F$ denotes the rank of  a free module $F$. 
\end{rem}


First, we define the differential map  $\partial_i: F_i \to F_{i-1}$ for $1\leq i \leq n-2d$.  For a tableau 
$$
T:= 
\ytableausetup
{mathmode, boxsize=2.9em}
\begin{ytableau}
a_1 & b_2 &  \cdots & b_{d}  & \cdots & b_{n-d-i}  \\
a_2 & c_2 &  \cdots & c_{d}  \\
a_3\\
\vdots \\
a_{i+2}
\end{ytableau}
\in \Tab(n-d-i, d , 1^{i})
$$
and an integer $j$ with $1\leq j \leq i+2$, set
$$
T^j:= 
\ytableausetup
{mathmode, boxsize=2.9em}
\begin{ytableau}
a_1 & b_2 &  \cdots & b_{d}  & \cdots & b_{n-d-i} & a_j  \\
a_2 & c_2 &  \cdots & c_{d}  \\
a_3\\
\vdots \\
a_{j-1}\\
a_{j+1} \\
\vdots \\
a_{i+2}
\end{ytableau}
\in \Tab(n-d-i+1, d , 1^{i-1}). 
$$
Then, for $1\leq i \leq n-2d$, we set
$$
\partial_i(e(T)\otimes 1):=\sum_{j=1}^{i+2} (-1)^{j-1}e(T^j)\otimes x_{a_j}\in V_{(n-d-i+1, d , 1^{i-1})}\otimes_K R(-d-i+1)=F_{i-1}. 
$$

Recall that $e(\sigma T)=\sgn(\sigma)e(T)$ for $\sigma \in C(T)$. It is easy to check that the maps $\partial_i$ defined above  satisfies $\partial_i(e(\sigma T)\otimes 1)=\sgn(\sigma) \, \partial_i(e(T)\otimes 1)$ for  all $\sigma \in C(T)$. The same is true for $\partial_i$ for $i > n-2d$ defined below. 
However, this is not enough.  
Since $\{e(T)\, |\, T\in \Tab(\lambda)\}$ is linearly dependent, the well-definedness of $\partial_i$ is non-trivial, 
and we will prove this Proposition~\ref{wdCM} below.

To define $\partial_{n-2d+1}$, we need further preparation. For  a tableau 
\begin{equation}\label{T for d-1}
T:= 
\ytableausetup
{mathmode, boxsize=3.3em}
\begin{ytableau}
a_1 & b_2 &  \cdots & b_{d-1}  \\
a_2 & c_2 &  \cdots & c_{d-1}  \\
a_3\\
\vdots \\
a_{n-2d+4}
\end{ytableau}
\in  \Tab( (d-1)^2 , 1^{n-2d+2})
\end{equation}
and integers $j,k$ with $1\leq j<k \leq n-2d+4$, set
$$
T_{j,k}:= 
\ytableausetup
{mathmode, boxsize=2.1em}
\begin{ytableau}
\vdots & b_2 &  \cdots & b_{d-1}  & a_j  \\
\vdots & c_2 &  \cdots & c_{d-1}  & a_k\\
\vdots 
\end{ytableau}
\in \Tab(d^2, 1^{n-2d}),$$
 where the first column is the ``transpose'' of 
$$\ytableausetup
{boxsize=3.4em}\begin{ytableau}
a_1 & a_2 & \cdots & a_{j-1} & a_{j+1} & \cdots  & a_{k-1} & a_{k+1} &\cdots & a_{n-2d+4} \end{ytableau}.$$
Then we set 
\begin{eqnarray*}
\partial_{n-2d+1}(e(T)\otimes 1)&:=&\sum_{1\leq j <k \leq n-2d+4} (-1)^{j+k-1}e(T_{j,k})\otimes x_{a_j}x_{a_k} \\
&\in& V_{(d^2,1^{n-2d})}\otimes_K R(-n+d)=F_{n-2d}. 
\end{eqnarray*}

Finally, we  define $\partial_i$ for $ n-2d+2 \le i \le n-d-1$, but this case is a bit more complicated. For  a tableau 
$$
T:= 
\ytableausetup
{mathmode, boxsize=3em}
\begin{ytableau}
a_1 & b_2 &  \cdots & b_{n-d-i}    &\cdots &b_{d-1}  \\
a_2 & c_2 &  \cdots & c_{n-d-i} \\
a_3\\
\vdots \\
a_{i+3}
\end{ytableau}
$$
in $\Tab(d-1, n-d-i, 1^{i+1})$ and an integer $j$ with $1 \le j \le i+3$, set 
$$
T_j:= 
\ytableausetup
{mathmode, boxsize=3.6em}
\begin{ytableau}
a_1 & b_2 &  \cdots & b_{n-d-i}  &   b_{n-d-i+1}  &\cdots & b_{d-1}  \\
a_2 & c_2 &  \cdots & c_{n-d-i} & a_j \\
\vdots \\
a_{j-1}\\
a_{j+1} \\
\vdots \\
a_{i+3}
\end{ytableau} 
$$
in $\Tab(d-1, n-d-i+1, 1^i)$. 
Then we set
\begin{equation}\label{dif for dd}
\partial_i(e(T) \otimes 1) := \sum_{j=1}^{i+3} \sum_{\sigma \in H}  (-1)^{j-1} e(\sigma(T_j)) \otimes x_{a_j} \in V_{(d-1, d-i+2, 1^{i-1})} \otimes_K R(-d-i) =F_{i-1},
\end{equation}
where $H$ is the set of the transpositions $(b_{n-d-i+1} \ b_{n-d-i+k})$ for $k \ge 2$. 

\begin{ex}\label{(4,3)}
Recall that the ``form" of  $\cF_\bullet^{(4,3)}$ was given in Introduction.  
The differential maps are as follows.   
\begin{eqnarray*}
\partial_3 \bigl ( e \bigl( \, 
\ytableausetup
{mathmode, boxsize=1em}
\begin{ytableau}
1 & 2 \\
3   \\
4 \\
5 \\
6 \\
7
\end{ytableau}
\, \bigr)  \otimes 1 \bigr) 
&=& 
e \bigl( \, 
\ytableausetup
{mathmode, boxsize=1em}
\begin{ytableau}
3 &2  \\
4 &1\\
5 \\
6 \\
7
\end{ytableau}
  \, \bigr )
\otimes x_1 
 - 
e \bigl( \, 
\ytableausetup
{mathmode, boxsize=1em}
\begin{ytableau}
1 &2  \\
4 &3\\
5 \\
6 \\
7
\end{ytableau}
  \, \bigr )
\otimes x_3
+e \bigl( \, 
\ytableausetup
{mathmode, boxsize=1em}
\begin{ytableau}
1 &2  \\
3 &4\\
5 \\
6 \\
7
\end{ytableau}
  \, \bigr )
\otimes x_4 
 \\
 & & -  
e \bigl ( \, 
\ytableausetup
{mathmode, boxsize=1em}
\begin{ytableau} 
1 &2  \\
3 &5\\
4 \\
6 \\
7
\end{ytableau}
\, \bigr )\otimes x_5   + 
e \bigl ( \, 
\ytableausetup
{mathmode, boxsize=1em}
\begin{ytableau}
1 &2  \\
3 &6\\
4 \\
5 \\
7
\end{ytableau}
\, \bigr) \otimes x_6 -
e \bigl( \, 
\ytableausetup
{mathmode, boxsize=1em}
\begin{ytableau}
1 &2  \\
3 &7\\
4 \\
5 \\
6
\end{ytableau}
\, \bigr)
\otimes x_7
\end{eqnarray*}
(in this case, $H =\{ \varepsilon \}$), 
\begin{eqnarray*}
\partial_2 \bigl( e\bigl ( \, 
\ytableausetup
{mathmode, boxsize=1em}
\begin{ytableau}
1 & 2 \\
3  & 4 \\
5 \\
6 \\
7
\end{ytableau}
\, \bigr )  \otimes 1 \bigr ) 
&=& e \bigl( \, 
\ytableausetup
{mathmode, boxsize=1em}
\begin{ytableau}
5 & 2 & 1 \\
6  & 4 & 3 \\
7 
\end{ytableau}
\, \bigr )  \otimes x_1x_3
- e \bigl( \, 
\ytableausetup
{mathmode, boxsize=1em}
\begin{ytableau}
3 & 2 &  1\\
6  & 4 & 5\\
7 
\end{ytableau}
\, \bigr  )  \otimes x_1x_5 \\
& & \qquad  \vdots \\
& & \qquad  \vdots \\
 & & 
- e\bigl ( \, 
\ytableausetup
{mathmode, boxsize=1em}
\begin{ytableau}
1 & 2 & 5 \\
3  & 4 & 7\\
6 
\end{ytableau}
\, \bigr )  \otimes x_5x_7
+ e \bigl( \, 
\ytableausetup
{mathmode, boxsize=1em}
\begin{ytableau}
1 & 2 &  6\\
3  & 4 & 7\\
5 
\end{ytableau}
\,\bigr  )  \otimes x_6x_7,
\end{eqnarray*}
and 
\begin{eqnarray*}
\partial_1 \bigl( e\bigl ( \, 
\ytableausetup
{mathmode, boxsize=1em}
\begin{ytableau}
1 & 2 &  3\\
4  & 5 & 6\\
7
\end{ytableau}
\, \bigr )  \otimes 1 \bigr ) 
&=& e \bigl( \, 
\ytableausetup
{mathmode, boxsize=1em}
\begin{ytableau}
4 & 2 & 3& 1\\
7  & 5 & 6 
\end{ytableau}
\, \bigr )  \otimes x_1
- e \bigl( \, 
\ytableausetup
{mathmode, boxsize=1em}
\begin{ytableau}
1 & 2 & 3& 4\\
7  & 5 & 6 
\end{ytableau}
\, \bigr  )  \otimes x_4 \\
 & & 
+ e\bigl ( \, 
\ytableausetup
{mathmode, boxsize=1em}
\begin{ytableau}
1 & 2 & 3& 7\\
4  & 5 & 6 
\end{ytableau}
\, \bigr )  \otimes x_7.
\end{eqnarray*}
\end{ex}

\begin{rem}
(1) The complex $\cF_\bullet^{(n-2,2)}$ has been constructed in \cite{SY2}, but the convention is not the same. 
The complex in \cite{SY2} is a free resolution of the residue ring $R/\ISp_{(n-2,2)}$, while the present version  is a free resolution of the ideal $\ISp_{(n-2,2)}$. 
Similarly, \cite{SY2} gave a minimal free resolution $\cF^{(d,d,1)}_\bullet$ of the residue ring $R/\ISp_{(d,d,1)}$.   In the present paper,  $\cF^{(d,d,1)}_\bullet$ means the corresponding resolution of $\ISp_{(d,d,1)}$.  

(2)  Except the last step, the resolution $\cF_\bullet^{(n-2,2)}$ is 2-linear. In this sense, the differential maps $\partial_i$ of $\cF_\bullet^{(n-d,d)}$ for $1 \le i \le n-2d$ (i.e., the differential maps of the $d$-linear strand) have been studied in \cite{SY2}.  On the other hand, the resolution $\cF_\bullet^{(d,d,1)}$ is $(d+2)$-linear. However, the construction of $\partial_i$ of $\cF_\bullet^{(n-d,d)}$ for $i \ge n-2d+2$  (i.e., the differential maps of the $(d+1)$-linear strand) is same as that of $\cF_\bullet^{(d,d,1)}$. So, to the $(d+1)$-linear strand of our  $\cF_\bullet^{(n-d,d)}$, we can apply arguments for $\cF_\bullet^{(d,d,1)}$ discussed in \cite{SY2}. One of the essential problems of the present paper is $\partial_{n-2d+1}$, which connect two linear strands.    
\end{rem}

\begin{prop}\label{wdCM}
The maps $\partial_i$ are well-defined for all $i$.
\end{prop}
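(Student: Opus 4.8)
The defining rule for each $\partial_i$ is prescribed only on the spanning set $\{e(T)\otimes 1\}$ of $F_i$, and this set is linearly dependent, so well-definedness means exactly that the rule respects every linear relation among the $e(T)$. The plan is to lift the rule to an $\fS_n$-equivariant linear map $\tilde\partial_i$ on the free vector space $U_\lambda$ on all symbols $[T]$, $T\in\Tab(\lambda)$: relabelling entries by $\tau\in\fS_n$ carries $T,T^j,a_j$ to $\tau T,(\tau T)^j,\tau(a_j)$, so $\tilde\partial_i(\tau[T])=\tau\,\tilde\partial_i([T])$. By the standard theory of Specht modules (\cite[\S2.6 and Theorem~2.6.5]{Sa}), the kernel of the surjection $U_\lambda\to V_\lambda$ is spanned by the column-antisymmetry relations $[\sigma T]-\sgn(\sigma)[T]$ (for $\sigma\in C(T)$) together with the Garnir relations between adjacent columns. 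The column-antisymmetry relations are already accounted for by the observation preceding the proposition, so it remains to show that $\tilde\partial_i$ kills every Garnir relation; by the equivariance just noted it is enough to treat one Garnir relation per $\fS_n$-orbit, i.e. to fix the two columns it involves.

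I would then split the Garnir relations into two kinds according to whether they involve the entries that $\partial_i$ moves or symmetrizes. For a Garnir relation between two columns both disjoint from this data (for $1\le i\le n-2d$ these are the columns of index $\ge 2$, and similarly in the other ranges), the box-move does not disturb the permuted entries: $\tilde\partial_i$ transports such a relation term-by-term into a signed $R$-linear combination of Garnir relations of the target shape, each tensored with the appropriate monomial, and this vanishes in $F_{i-1}$. This disposes of all relations except those that genuinely interact with the first column.

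The crux is the Garnir relations that do involve the first column, since the entry pushed onto a new box may itself be one of the entries the Garnir element permutes. Here I would pass to the Specht-polynomial model \eqref{Isom}: under $e(T)\mapsto f_T$ the contribution of the long first column factors as a Vandermonde $V(x_{a_1},\dots,x_{a_{i+2}})$ times the fixed Specht factors of the remaining columns, and for $1\le i\le n-2d$ the map $\partial_i$ becomes the signed cofactor sum $V(x_{a_1},\dots)\mapsto\sum_{j}(-1)^{j-1}x_{a_j}\,V(x_{a_1},\dots,\widehat{x_{a_j}},\dots)$. The relevant Garnir relation then turns into an antisymmetrization identity among Vandermonde determinants, and the content of this case is the bookkeeping showing that deleting $a_j$ from the long column while multiplying by $x_{a_j}$ sends the relation into the span of Garnir relations of the target. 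The single-box maps in the range $n-2d+2\le i\le n-d-1$ (which also carry the symmetrization over $H$) are handled by the same method, and in fact coincide with the differentials of $\cF_\bullet^{(d,d,1)}$ treated in \cite{SY2}; likewise the $i\le n-2d$ maps follow the pattern already used for $\cF_\bullet^{(n-2,2)}$ there.

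The step I expect to be the main obstacle is $\partial_{n-2d+1}$, the map joining the two linear strands, which is the genuinely new case. It extracts two entries $a_j,a_k$ from the long first column and appends them to the ends of the top two rows with weight $x_{a_j}x_{a_k}$, so the Garnir relations along the first column now interact with a \emph{double} box-move. Verifying that $\tilde\partial_{n-2d+1}$ annihilates these relations amounts to a degree-two analogue of the cofactor identity above; controlling the signs $(-1)^{j+k-1}$ together with the simultaneous deletion of the two indices $j<k$, and checking that the outcome still lands in the span of the target Garnir relations, is where the main effort lies.
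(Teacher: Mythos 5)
Your general framework (lifting the rule to the free vector space on tableaux, using $\fS_n$-equivariance, and reducing well-definedness to the column relations plus the Garnir relations) is sound, and your treatment of the ranges $1\le i\le n-2d$ and $n-2d+2\le i\le n-d-1$ matches the paper, which simply invokes the arguments of \cite{SY2} for $\cF_\bullet^{(n-2,2)}$ and $\cF_\bullet^{(d,d,1)}$. But for the one genuinely new case, $\partial_{n-2d+1}$, your proposal stops exactly where the proof has to begin: you say that verifying the Garnir relations for the double box move ``amounts to a degree-two analogue of the cofactor identity'' and that the sign bookkeeping ``is where the main effort lies.'' That is a statement of the problem, not a solution. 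Nothing in your text establishes that the signed double-deletion sum $\sum_{j<k}(-1)^{j+k-1}e(T_{j,k})\otimes x_{a_j}x_{a_k}$ annihilates the Garnir relations involving the long first column, and that verification is precisely the hard point; declaring it a ``degree-two analogue'' does not make it follow from the degree-one case.

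The paper sidesteps this computation entirely by a factorization trick that your proposal misses. It introduces two single-box-move maps
$\varphi: V_{((d-1)^2,1^{n-2d+2})}\to V_{(d,d-1,1^{n-2d+1})}\otimes_K R_1$ (move a box from the first column to the first row) and $\psi: V_{(d,d-1,1^{n-2d+1})}\to V_{(d^2,1^{n-2d})}\otimes_K R_1$ (move a box from the first column to the second row), each well-defined by exactly the same \cite{SY2}-type arguments as the other differentials, together with the symmetrization $\mu: R_1\otimes_K R_1\to R_2$, $\mu(x_j\otimes x_i)=\tfrac12 x_ix_j$. A short computation --- tracking, e.g., the coefficients of $x_{a_1}\otimes x_{a_2}$ and $x_{a_2}\otimes x_{a_1}$ and using a column relation to reconcile the signs --- shows that $(\Id\otimes\mu)\circ(\psi\otimes\Id)\circ\varphi$ coincides with the degree-$(n-d+2)$ part of $\partial_{n-2d+1}$; since a map of free modules generated in a single degree is determined by that graded piece, well-definedness of $\partial_{n-2d+1}$ follows from that of $\varphi$, $\psi$ and $\mu$. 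To complete your version you would either have to carry out the two-index Garnir verification in full, or adopt this factorization, which reduces the new case to cases already settled.
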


\begin{proof}
The well-definedness of $\partial_i$ for $i \le n-2d$ (resp. $i \ge n-2d+2$) can be shown by the same way as \cite[Theorem~4.2]{SY2}  for  $\cF_\bullet^{(n-2,2)}$  (resp. \cite[Theorem~6.2]{SY2} for  $\cF_\bullet^{(d,d,1)}$). 
So only $\partial_{n-2d+1}$ remains. To prove this, we use the following linear maps 
$$\varphi: V_{((d-1)^2, 1^{n-2d+2})} \too V_{(d, d-1, 1^{n-2d+1})} \otimes_K R_1$$
 $$\psi: V_{(d,d-1, 1^{n-2d+1})} \too V_{(d^2, 1^{n-2d})} \otimes_K R_1$$
defined by 
$$
\varphi(e(T)):=\sum_{i \ge1} (-1)^{i-1}e(T^i)\otimes x_{a_i}\in V_{(d,d-1, 1^{n-2d+1})} \otimes_K R_1
$$
for 
$$
T= 
\ytableausetup
{mathmode, boxsize=2em}
\begin{ytableau}
a_1 & b_2 &   \cdots  & b_{d-1}   \\
a_2 & c_2 & \cdots   & c_{d-1}\\
a_3\\
\vdots
\end{ytableau} \in \Tab((d-1)^2, 1^{n-2d+2}),
$$
and 
$$
\psi(e(T')):=\sum_{j \ge 1} (-1)^{j-1}e(T'_j)\otimes x_{a_j}\in V_{(d^2, 1^{n-2d})} \otimes_K R_1
$$
for 
$$
T'= 
\ytableausetup
{mathmode, boxsize=2em}
\begin{ytableau}
a_1 & b_2 &   \cdots  & b_{d-1} & b_d  \\
a_2 & c_2 & \cdots   & c_{d-1}\\
a_3\\
\vdots
\end{ytableau} \in \Tab(d,d-1, 1^{n-2d+1}). 
$$
The well-definedness of $\varphi$ (resp. $\psi$) is the same thing as that of $\partial_i$ for $i \le n-2d$ (resp. $i \ge n-2d+2$). Finally, consider the linear map $\mu: R_1 \otimes_K R_1 \to R_2$ defined by
$$
\mu(x_j \otimes x_i)=\frac12 x_ix_j.
$$  

Identifying the degree $(n-d+2)$-part of  $F_{n-2d+1}= V_{((d-1)^2, 1^{n-2d+2})} \otimes_K R(-n+d-2)$ with $V_{((d-1)^2, 1^{n-2d+2})}$,  the composition 
\begin{eqnarray*}
V_{((d-1)^2, 1^{n-2d+2})} \stackrel{\varphi}{\too} V_{(d, d-1, 1^{n-2d+1})} \otimes_K R_1 
&\stackrel{\psi \otimes \Id}{\too}& V_{(d^2, 1^{n-2d})} \otimes_K R_1\otimes_K R_1 \\ 
&\stackrel{\Id \otimes \mu}{\too}& V_{(d^2, 1^{n-2d})} \otimes_K R_2
\end{eqnarray*}
coincides with the degree $(n-d+2)$-part $[\partial_{n-2d+1}]_{n-d+2}$ of $\partial_{n-2d+1}$. Here, for example, we will compute the  $-\otimes x_{a_1} \otimes x_{a_2}$ and $-\otimes x_{a_2} \otimes x_{a_1}$
parts of  $(\psi \otimes \Id) \circ \varphi (e(T))$ for the above $T$.  
\begin{eqnarray*}
&&(\psi \otimes \Id) \circ \varphi (e(T)) \\
&=& (\psi \otimes \Id) 
 \bigl ( e \bigl( \, 
\ytableausetup
{mathmode, boxsize=1.9em}
\begin{ytableau}
a_2 & \cdots & b_{d-1} & a_1\\
a_3  & \cdots & c_{d-1} \\
\vdots 
\end{ytableau}
\, \bigr)  \otimes x_{a_1} -
e \bigl( \, 
\ytableausetup
{mathmode, boxsize=1.9em}
\begin{ytableau}
a_1 & \cdots & b_{d-1} & a_2\\
a_3  & \cdots & c_{d-1} \\
\vdots 
\end{ytableau}
\, \bigr)  \otimes x_{a_2} + \cdots \bigr) \\
&=&
e \bigl( \, 
\ytableausetup
{mathmode, boxsize=1.9em}
\begin{ytableau}
a_3 & \cdots & b_{d-1} & a_1\\
a_4  & \cdots & c_{d-1} &a_2\\
\vdots 
\end{ytableau}
\, \bigr)   \otimes x_{a_2} \otimes x_{a_1} -
e \bigl( \, 
\ytableausetup
{mathmode, boxsize=1.9em}
\begin{ytableau}
a_3 & \cdots & b_{d-1} & a_2\\
a_4  & \cdots & c_{d-1} & a_1\\
\vdots 
\end{ytableau}
\, \bigr)   \otimes x_{a_1} \otimes x_{a_2} + \cdots \\
&=&
e \bigl( \, 
\ytableausetup
{mathmode, boxsize=1.9em}
\begin{ytableau}
a_3 & \cdots &  b_{d-1} & a_1\\
a_4  & \cdots &  c_{d-1} &a_2\\
\vdots 
\end{ytableau}
\, \bigr)   \otimes x_{a_2} \otimes x_{a_1} 
+
e \bigl( \, 
\ytableausetup
{mathmode, boxsize=1.9em}
\begin{ytableau}
a_3 & \cdots & b_{d-1} & a_1\\
a_4  & \cdots & c_{d-1} & a_2\\
\vdots 
\end{ytableau}
\, \bigr)   \otimes x_{a_1} \otimes x_{a_2} + \cdots.
\end{eqnarray*}
Now  we have shown that  $[\partial_{n-2d+1}]_{n-d+2}$ is well-defined. 
Since $\partial_{n-2d+1}$ is determined by $[\partial_{n-2d+1}]_{n-d+2}$,  it is also well-defined. 
\end{proof}

\begin{prop}
$\cF_\bullet^{(n-d,d)}$ is actually a chain complex. 
\end{prop}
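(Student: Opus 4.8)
The plan is to verify $\partial_{i-1}\circ\partial_i=0$ separately on each of the three regimes and then at the one junction where the two linear strands meet. By Proposition~\ref{wdCM} every $\partial_i$ is well-defined, so it suffices to compute the composition on polytabloids $e(T)\otimes 1$ for an arbitrary tableau $T$, since these generate the free modules $F_i$. The computations split naturally as follows: (i) $\partial_{i-1}\circ\partial_i=0$ for $2\le i\le n-2d$ (inside the $d$-linear strand); (ii) $\partial_{i-1}\circ\partial_i=0$ for $n-2d+3\le i\le n-d-1$ (inside the $(d+1)$-linear strand); (iii) the two ``boundary'' cases $\partial_{n-2d}\circ\partial_{n-2d+1}=0$ and $\partial_{n-2d+1}\circ\partial_{n-2d+2}=0$.

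For case (i), applying $\partial_{i-1}$ to the defining sum $\partial_i(e(T)\otimes 1)=\sum_{j=1}^{i+2}(-1)^{j-1}e(T^j)\otimes x_{a_j}$ produces a double sum indexed by pairs $(j,k)$ recording which two of the entries $a_1,\ldots,a_{i+2}$ get promoted to the top row. The standard mechanism is that each unordered pair $\{a_j,a_k\}$ arises from exactly two ordered terms whose signs cancel: promoting $a_j$ then $a_k$ versus $a_k$ then $a_j$ yields the same tableau (up to a column permutation whose sign is tracked by $e(\sigma T)=\sgn(\sigma)e(T)$) but with opposite overall sign, so the terms annihilate in pairs. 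Case (ii) is handled the same way—this strand is verbatim the $(d+1)$-linear strand of $\cF_\bullet^{(d,d,1)}$, whose chain-complex property was established in \cite{SY2} (as noted in Remark after the construction), so I would invoke that directly rather than recomputing. **The hard part will be** case (iii), precisely the step $\partial_{n-2d}\circ\partial_{n-2d+1}$ where the quadratic map $\partial_{n-2d+1}$ (sending $e(T)\otimes 1$ to $\sum_{j<k}(-1)^{j+k-1}e(T_{j,k})\otimes x_{a_j}x_{a_k}$) feeds into the linear map $\partial_{n-2d}$, and symmetrically $\partial_{n-2d+1}\circ\partial_{n-2d+2}$.

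The clean way to treat the junction is to exploit the factorization established in the proof of Proposition~\ref{wdCM}: on the relevant graded piece, $\partial_{n-2d+1}$ equals $(\Id\otimes\mu)\circ(\psi\otimes\Id)\circ\varphi$, where $\varphi,\psi$ are the two elementary ``box-promoting'' maps and $\mu$ symmetrizes. Composing with $\partial_{n-2d}$, which is itself a promotion map of the same shape as $\psi$, reduces the verification to an identity among the three maps $\varphi,\psi$ and the differential one step further in. Concretely, I would argue that $\partial_{n-2d}\circ\psi$ vanishes by the same pairwise-cancellation of promoted boxes that governs case (i), since $\psi$ and $\partial_{n-2d}$ both promote a single box from the tail column into the top two rows; the symmetrization $\mu$ guarantees the two promotion orders enter with equal weight, so after applying the antisymmetrizing signs $(-1)^{j+k-1}$ the contributions cancel. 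For $\partial_{n-2d+1}\circ\partial_{n-2d+2}$ the analysis is dual: one writes $\partial_{n-2d+1}$ through the same $\varphi,\psi,\mu$ factorization and checks that precomposing with $\partial_{n-2d+2}$ (a promotion in the $(d+1)$-linear shape) produces matched opposite-sign pairs. Throughout, the essential bookkeeping device is that moving a box out of a column and reinserting another is governed by a sign that the polytabloid relation $e(\sigma T)=\sgn(\sigma)e(T)$ converts into exactly the cancelling sign, so the entire verification is a disciplined sign-chase rather than a genuinely new idea—the only subtlety is keeping the quadratic term's symmetrization and the two linear promotions consistently normalized across the junction.
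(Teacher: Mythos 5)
Your division into the two strands plus the junction, and your treatment of cases (i) and (ii), match the paper's proof (which likewise delegates the in-strand computations to \cite{SY2}). The genuine gap is exactly at the place you flag as the hard part: the junction compositions do \emph{not} vanish by pairwise cancellation, and no normalization of the symmetrization $\mu$ can make them do so. Fix a tableau $T\in\Tab((d-1)^2,1^{n-2d+2})$ and look at the coefficient of $x_{a_j}x_{a_k}x_{a_l}$ (with $j<k<l$) in $\partial_{n-2d}\partial_{n-2d+1}(e(T)\otimes 1)$. Exactly three terms contribute, namely $(T_{j,k})^l$, $(T_{j,l})^k$ and $(T_{k,l})^j$, whose column $d$ (of length two) and singleton column $d+1$ contain, respectively, $\{a_j,a_k\}$ and $a_l$; $\{a_j,a_l\}$ and $a_k$; $\{a_k,a_l\}$ and $a_j$. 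No two of these have equal polytabloids up to sign: the first differs from the other two already at the level of tabloids (its first row contains $a_l$ rather than $a_k$ or $a_j$), and the last two, which do share a tabloid, are not related by any column permutation since their column-$d$ sets differ. So there is no pairing at all; the sum vanishes only because of the three-term \emph{Garnir relation}
\[
e\bigl((T_{j,k})^l\bigr)-e\bigl((T_{j,l})^k\bigr)+e\bigl((T_{k,l})^j\bigr)=0,
\]
which is precisely the mechanism the paper uses (it carries this out explicitly for $\partial_{n-2d+1}\partial_{n-2d+2}$, where the coefficient of $x_{a_i}x_{a_j}x_{a_k}$ is $(-1)^{i+j+k}\bigl(e((T_i)_{j,k})-e((T_j)_{i,k})+e((T_k)_{i,j})\bigr)$, and notes that the other junction composition is similar). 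Your argument never invokes a Garnir relation; the only polytabloid identity you use is the column antisymmetry $e(\sigma T)=\sgn(\sigma)e(T)$, and that identity is insufficient here.

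Your proposed reduction through the factorization of Proposition~\ref{wdCM} also fails concretely: the composite of $\psi$ with the next promotion map is \emph{not} zero. $\psi$ promotes a box into the second row while $\partial_{n-2d}$ promotes a box into the first row, so interchanging the roles of the two boxes changes the tabloid (the first row contains one entry in one case and the other entry in the other case); such terms cannot cancel, with or without the weight $\tfrac12$ coming from $\mu$. Moreover, the three contributions to a fixed cubic monomial differ also in which box $\varphi$ handles, so the vanishing cannot be checked one factor at a time: it genuinely mixes $\varphi$, $\psi$ and $\partial_{n-2d}$. (The paper uses the factorization only to prove well-definedness, not the chain-complex property.) A related, smaller imprecision: in case (i) the two tableaux $(T^j)^k$ and $(T^k)^j$ are not related by a column permutation either; their polytabloids coincide because they have the same tabloid and the same column stabilizer. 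That slip is harmless inside the $d$-linear strand, but it is the same misconception that breaks down at the junction, where the paper's proof really does need a new ingredient.
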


\begin{proof}
It suffices to show that $\partial_i \partial_{i+1}(e(T) \otimes 1  )=0$ for all $i$ and $T$. The cases $i \ne n-2d, n-2d+1$ can be shown by direct computation. More precisely, for $i < n-2d$ (resp. $i > n-2d+1$), one can check it by the same way as $\cF^{(n-2,2)}_\bullet$ 
(resp. $\cF^{(d,d,1)}_\bullet$) discussed in \cite{SY2}.  Consider the case  $i=n-2d+1$. Take $e(T) \otimes 1 \in F_{n-2d+2}$, where 
$$
T= 
\ytableausetup
{mathmode, boxsize=2em}
\begin{ytableau}
a_1 & b_2 &   \cdots  & b_{d-2} & b_{d-1}  \\
a_2 & c_2 & \cdots   & c_{d-2}\\
a_3\\
\vdots
\end{ytableau} \in \Tab(d-1, d-2, 1^{n-2d+3}). 
$$
Note that  $\partial_{n-2d+1} \partial_{n-2d+2}(e(T) \otimes 1)$ can be decomposed as 
$$\partial_{n-2d+1} \partial_{n-2d+2}(e(T) \otimes 1) =\sum_{i < j<k} \alpha_{i,j,k} \otimes x_i x_j x_k,$$
for $\alpha_{i,j,k} \in V_{(d^2, 1^{n-2d})}$, and 
\begin{eqnarray*}
\alpha_{i,j,k} &=& (-1)^{i-1}(-1)^{(j-1)+(k-1)-1} e((T_i)_{j,k})+(-1)^{j-1}(-1)^{i+(k-1)-1}e((T_j)_{i,k}) \\
                    && \qquad \qquad \qquad \qquad  +(-1)^{k-1}(-1)^{i+j-1}e((T_k)_{i,j})\\
&=& (-1)^{i+j+k}( e((T_i)_{j,k})-e((T_j)_{i,k}) + e((T_k)_{i,j})).
\end{eqnarray*}
The right most boxes of $(T_i)_{j,k}, (T_j)_{i,k}$ and $(T_k)_{i,j}$ are the following 
$$
(T_i)_{j,k}=
\ytableausetup
{mathmode, boxsize=2.1em}
\begin{ytableau}
\none[\cdots]  & b_{d-1} & a_j\\
\none[\cdots]  & a_i & a_k\\
\end{ytableau}, 
\qquad 
 (T_j)_{i,k}=
\ytableausetup
{mathmode, boxsize=2.1em}
\begin{ytableau}
\none[\cdots]  & b_{d-1} & a_i\\
\none[\cdots]  & a_j & a_k\\
\end{ytableau} ,
\qquad 
(T_k)_{i,j}=
\ytableausetup
{mathmode, boxsize=2.1em}
\begin{ytableau}
\none[\cdots]  & b_{d-1} & a_i\\
\none[\cdots]  & a_k & a_j\\
\end{ytableau} 
$$ 
(these tableau are same in the remaining boxes). 
By a Garnir relation, we have $ e((T_i)_{j,k})-e((T_j)_{i,k}) + e((T_k)_{i,j})=0$, and hence $\partial_{n-2d+1} \partial_{n-2d+2}(e(T) \otimes 1) =0$. 

That  $\partial_{n-2d} \partial_{n-2d+1}(e(T) \otimes 1)=0$ can be shown in a similar way (when $d=2$, it has been  shown in \cite{SY2}). 
\end{proof}

For the complex $\cF^{(n-d,d)}_\bullet$, the augmentation map $\partial_0 : F_0 \to \ISp_{(n-d,d)}$ is defined by 
$$\partial_0 : F_0 = V_{(n-d,d)} \otimes_K R(-d) \ni e(T)\otimes 1 \longmapsto  f_T \in \ISp_{(n-d,d)}.$$ 
Clearly, $\partial_0$ is surjective and  well-defined  (recall the map \eqref{Isom}).    

\begin{lem}\label{aug}
With the above notation, we have $\partial_0 \partial_1=0$. 
\end{lem}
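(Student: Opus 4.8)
The plan is to check the identity on generators. Since $\partial_1$ is well-defined (Proposition~\ref{wdCM}) and $\partial_0$ is well-defined and $R$-linear, the composite $\partial_0\partial_1$ is a well-defined map, so to prove it is zero it suffices to verify $\partial_0\partial_1(e(T)\otimes 1)=0$ for every $T$. I would separate the generic case $n>2d$, in which $\partial_1$ is the degree-one map given by the first formula with $i=1$, from the boundary case $n=2d$, in which $\partial_1=\partial_{n-2d+1}$ is the degree-two connecting map.

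In the generic case, write $T\in\Tab(n-d-1,d,1)$ with first column $a_1,a_2,a_3$. Using $R$-linearity of $\partial_0$,
$$\partial_0\partial_1(e(T)\otimes 1)=\sum_{j=1}^{3}(-1)^{j-1}x_{a_j}\,f_{T^j}.$$
The crucial point is that the three polynomials $f_{T^j}$ agree in every column factor except the first: passing from $T$ to $T^j$ deletes $a_j$ from the first column (turning it into a single-box column that contributes nothing) and leaves the two-box columns $2,\dots,d$ untouched. Hence, setting $P:=\prod_{k=2}^{d}(x_{b_k}-x_{c_k})$, the first columns of $T^1,T^2,T^3$ contribute $x_{a_2}-x_{a_3}$, $x_{a_1}-x_{a_3}$, $x_{a_1}-x_{a_2}$ respectively, and
$$\partial_0\partial_1(e(T)\otimes 1)=P\cdot\bigl(x_{a_1}(x_{a_2}-x_{a_3})-x_{a_2}(x_{a_1}-x_{a_3})+x_{a_3}(x_{a_1}-x_{a_2})\bigr)=0,$$
the bracket vanishing identically.

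In the boundary case $T$ has first column $a_1,\dots,a_4$ and $\partial_0\partial_1(e(T)\otimes 1)=\sum_{1\le j<k\le 4}(-1)^{j+k-1}x_{a_j}x_{a_k}\,f_{T_{j,k}}$. The same bookkeeping shows $f_{T_{j,k}}=(x_{a_p}-x_{a_q})(x_{a_j}-x_{a_k})\,P$, where $\{p,q\}=\{1,2,3,4\}\setminus\{j,k\}$ and $P$ is again the common product over the inner two-box columns. Pairing each two-element subset with its complement collapses the six terms into three, yielding a degree-two alternating polynomial in $x_{a_1},\dots,x_{a_4}$ that is identically zero; alternatively, the vanishing can be read off from the factorization $\partial_{n-2d+1}=(\Id\otimes\mu)\circ(\psi\otimes\Id)\circ\varphi$ of Proposition~\ref{wdCM}.

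The only real work is the elementary polynomial cancellation, and I expect the boundary case to be the mildly harder one, since its identity has six terms rather than three. In both cases the decisive step is to factor out the shape-independent product $P$ first, so that the problem reduces to a small identity among the entries of the distinguished column(s).
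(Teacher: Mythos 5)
Your proof is correct, and it supplies exactly what the paper leaves implicit: the paper's own proof of Lemma~\ref{aug} is only a citation (the case $d=2$ is done in \cite{SY2}, ``and the proof there works for the general case''), whereas you carry out that computation explicitly for all $d$, correctly splitting off the boundary case $n=2d$, where $\partial_1=\partial_{n-2d+1}$ is the quadratic connecting map rather than the linear-strand map. Both cancellations check out: in the linear case the bracket $x_{a_1}(x_{a_2}-x_{a_3})-x_{a_2}(x_{a_1}-x_{a_3})+x_{a_3}(x_{a_1}-x_{a_2})$ vanishes identically, and in the quadratic case the six terms, paired with complementary two-element subsets, do sum to zero, so the factoring-out-$P$ strategy is sound and arguably cleaner than an appeal to \cite{SY2}, since it makes the lemma self-contained. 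Two small blemishes. First, after pairing, the sum is a degree-\emph{four} alternating polynomial in $x_{a_1},\dots,x_{a_4}$, not ``degree-two'' as you write; the low-degree argument still applies, since an alternating polynomial in four variables is divisible by the Vandermonde determinant of degree $6>4$, but the degree count should be stated correctly (or one just expands, as all twelve monomials cancel). Second, the ``alternatively'' remark is not a proof as it stands: the factorization $\partial_{n-2d+1}=(\Id\otimes\mu)\circ(\psi\otimes\Id)\circ\varphi$ from Proposition~\ref{wdCM} establishes well-definedness of $\partial_{n-2d+1}$, but the vanishing of $\partial_0\circ\partial_{n-2d+1}$ does not follow from it without an additional argument (one would still need to know that composing $\psi$ with the Specht-polynomial evaluation gives zero), so that sentence should be deleted or justified separately.
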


\begin{proof}
The case $d=2$ is shown in \cite{SY2}, and the proof there works for the general case.  (Since  $\cF_\bullet^{(n-2,2)}$ in \cite{SY2} is a resolution of $R/\ISp_{(n-2,2)}$, our  $\partial_0 \partial_1=0$ corresponds to $\partial_1  \partial_2 =0$ there.)
\end{proof}

We define the $\mathfrak{S}_n$-action on $\cF_\bullet^{(n-d,d)}$. For 
$F_i=V_\lambda \otimes_K R(-j)$ (here $\lambda$ is a suitable partition of  $n$ and $j$ is a suitable integer) and  $\sigma \in\fS_n$, set $\sigma(v\otimes f):=\sigma v \otimes \sigma f$. 
It is easy to check that $\partial_i$ is  an  $\mathfrak{S}_n$-homomorphism for all $i$. 

\begin{lem}\label{injective}
With the above notation, the degree $j$-part $[\partial_i]_j : [F_i]_j \to [F_{i-1}]_j$ is injective for all $i \ge 1$.   
Similarly, the degree $d$-part of the augmentation map $\partial_0 : F_0 \to \ISp_{(n-d,d)}$ is injective (actually, bijective). 
\end{lem}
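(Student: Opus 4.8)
The plan is to prove injectivity degree by degree by exhibiting, for each strand, a natural surjection onto the source module $[F_i]_j$ in the \emph{opposite} direction, so that a rank/dimension count forces $[\partial_i]_j$ to be injective. Concretely, recall that $[F_i]_j \cong V_{\lambda_i} \otimes_K R_{j - \deg F_i}$ where $\lambda_i$ is the relevant partition, so the source is a sum of copies of an irreducible Specht module. Because each $\partial_i$ is an $\fS_n$-homomorphism (as already noted just before the lemma), its degree-$j$ restriction $[\partial_i]_j$ is a map of $\fS_n$-modules, and by Schur's lemma (over $K$ with $\chara(K)=0$, so all modules are semisimple) injectivity of $[\partial_i]_j$ can be checked componentwise on isotypic pieces. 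I would first dispose of the augmentation map: the degree-$d$ part of $[F_0]_d = V_{(n-d,d)} \otimes_K R_0 \cong V_{(n-d,d)}$ maps by $e(T) \mapsto f_T$, which is precisely the isomorphism \eqref{Isom}, hence bijective.

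For $i \ge 1$, the key idea is to build an $\fS_n$-equivariant \emph{splitting candidate} or, more simply, to compute the image explicitly on the standard basis and show it has full rank. First I would treat the $d$-linear strand ($1 \le i \le n-2d$), where the source is generated by $e(T) \otimes 1$ for $T \in \Tab(n-d-i,d,1^i)$ and $\partial_i$ sends this to an alternating sum $\sum_{j} (-1)^{j-1} e(T^j) \otimes x_{a_j}$. Here I would introduce a ``contraction'' or ``trace'' operator going the other way---recording the variable $x_{a_j}$ and un-doing the box move---and show that composing $\partial_i$ with this recovers a nonzero scalar multiple of the identity on $[F_i]_j$ in the relevant degree. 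The minimality statement $(*)$ from Lemma~\ref{cancel}, which pins down that $\beta_{i,j} \ne 0$ forces $j = d+i$ in this range, guarantees there is no lower-degree ``collision'' to worry about, so it suffices to handle the single critical degree $j = d+i$ (and $j = d+i+1$ on the other strand). The symmetric strand argument for $i \ge n-2d+2$ is formally identical to the $\cF_\bullet^{(d,d,1)}$ case already handled in \cite{SY2}, so I would invoke that by the same reasoning.

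The main obstacle is the connecting map $\partial_{n-2d+1}$, which bridges the two strands and has the genuinely different ``quadratic'' form $\sum_{j<k} (-1)^{j+k-1} e(T_{j,k}) \otimes x_{a_j} x_{a_k}$. For this one I would exploit the factorization established in the proof of Proposition~\ref{wdCM}, namely that $[\partial_{n-2d+1}]_{n-d+2}$ equals the composite $(\Id \otimes \mu) \circ (\psi \otimes \Id) \circ \varphi$. Since injectivity is preserved under composition, it reduces to checking that each of $\varphi$, $\psi \otimes \Id$, and $\Id \otimes \mu$ is injective on the relevant piece; the two maps $\varphi$ and $\psi$ are of the same shape as the degree-one pieces of the linear-strand differentials, so their injectivity follows once the linear-strand case is done, while $\mu$ multiplies variables and its only possible kernel is the antisymmetric part $x_j \otimes x_i - x_i \otimes x_j$, which I must verify is not hit by the symmetrically-built image. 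I expect the delicate point to be confirming that this antisymmetrization does not cause cancellation, and I would resolve it precisely as in the explicit computation displayed in the proof of Proposition~\ref{wdCM}, where the two terms $\otimes\, x_{a_2} \otimes x_{a_1}$ and $\otimes\, x_{a_1} \otimes x_{a_2}$ were shown to add rather than cancel after applying a Garnir relation to identify the tableaux.
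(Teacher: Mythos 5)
Your proposal misses the single observation that makes this lemma a one-liner, and the machinery you substitute for it has real gaps. In the lemma, $j$ is the degree in which $F_i$ is generated (i.e.\ $j=d+i$ for $i\le n-2d$ and $j=d+i+1$ for $i>n-2d$); in that degree $[F_i]_j\cong V_\lambda\otimes_K R_0\cong V_\lambda$ is a \emph{single irreducible} $\fS_n$-module, so any nonzero $\fS_n$-homomorphism out of it is automatically injective, and $[\partial_i]_j$ is visibly nonzero. That is the paper's entire proof, uniform in $i$, and it covers the connecting map $\partial_{n-2d+1}$ with no extra work. Your setup instead treats general $j$ and asserts that $[F_i]_j\cong V_{\lambda_i}\otimes_K R_{j-\deg F_i}$ is ``a sum of copies of an irreducible Specht module'': this is false for the diagonal $\fS_n$-action as soon as $j>\deg F_i$ (e.g.\ $R_1$ is the permutation module $V_{(n)}\oplus V_{(n-1,1)}$, so $V_\lambda\otimes R_1$ contains many other irreducibles), and the statement ``injective for all $j$'' cannot be what is meant anyway: since $\partial_i\partial_{i+1}=0$, injectivity of $[\partial_i]_j$ in every degree would force $\partial_{i+1}=0$. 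So restricting to the critical degree is not a convenience justified by $(*)$; it is the content of the lemma, and once you are there the irreducibility argument finishes everything. (Your treatment of the augmentation map is correct and is exactly the paper's.)

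Beyond this, the steps you defer are exactly the hard ones, and one of them fails as described. For the linear strands you posit a ``contraction'' operator whose composite with $[\partial_i]_j$ is a nonzero scalar times the identity; no such operator is constructed, and producing one is essentially equivalent to the injectivity you want. For $\partial_{n-2d+1}$ you propose to deduce injectivity of $(\Id\otimes\mu)\circ(\psi\otimes\Id)\circ\varphi$ from injectivity of its factors, but $\Id\otimes\mu$ is \emph{not} injective (its kernel is $V_{(d^2,1^{n-2d})}$ tensored with the antisymmetric part of $R_1\otimes_K R_1$), so that reduction is invalid; what you would actually need is $\Image((\psi\otimes\Id)\circ\varphi)\cap\Ker(\Id\otimes\mu)=0$, and the computation you cite from Proposition~\ref{wdCM} does not provide it --- that computation establishes well-definedness (Garnir relations are killed) by exhibiting terms that add rather than cancel, which shows the composite is \emph{nonzero}, not that it is injective. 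Of course, ``nonzero'' is all one needs once the irreducibility of the source is invoked, which brings you back to the paper's argument.
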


The corresponding fact has been shown in \cite{SY2}, but we will explain it again.  

\begin{proof}
  Since $[F_i]_j \cong V_\lambda$  is irreducible as an $\fS_n$-module and  $[\partial_i]_j$ is  an  $\fS_n$-homomorphism, it is either the zero map or an injection. However, it is clearly non-zero.   
\end{proof}

By Lemmas~\ref{aug} and \ref{injective}, we see that the former half $F_{n-2d}\to \cdots \to F_1 \to F_0 \to 0$ of $\cF_\bullet^{(n-d,d)}$ is a subcomplex of  a minimal free resolution $P_\bullet$ of $\ISp_{(n-d,d)}$ such that each $F_i$ is a direct summand of $P_i$ for each $0 \le i \le n-2d$.

\section{The proof of the main theorem}
The following is the main result of the present paper, and this section is devoted to its proof.    

\begin{thm}\label{main}
The complex $\cF_\bullet^{(n-d,d)}$ is a minimal free resolution  of $\ISp_{(n-d,d)}$.  
\end{thm}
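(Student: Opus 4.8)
The plan is to isolate the one genuinely new input, the concentration statement $(*)$, and to show that everything else is bookkeeping built on \S3. Note first that \emph{minimality} is automatic: by construction every entry of every differential matrix is a variable or a product of two variables, hence lies in $\m=(x_1,\dots,x_n)$, so once $\cF_\bullet^{(n-d,d)}$ is known to be a resolution it is automatically a minimal one. Thus the whole content is acyclicity together with $H_0=\ISp_{(n-d,d)}$, and the crux is to prove $(*)$: that $\beta_{i,j}(\ISp_{(n-d,d)})\ne0$ forces $j=d+i$ when $i\le n-2d$ and $j=d+i+1$ when $i>n-2d$.

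Granting $(*)$, I would first observe that Lemma~\ref{cancel} pins down \emph{all} graded Betti numbers. Writing $a_i:=\beta_{i,d+i}$ and $b_i:=\beta_{i,d+i+1}$, the second assertion of Lemma~\ref{cancel} reads $a_i-b_{i-1}=c_i$, where $c_i$ is the displayed quantity. Since $(*)$ says $b_i=0$ for $i\le n-2d$ and $a_i=0$ for $i>n-2d$, feeding this into $a_i-b_{i-1}=c_i$ yields $a_i=\dim_K V_{(n-d-i,d,1^{i})}=\rank F_i$ for $i\le n-2d$, and $b_{i-1}=\dim_K V_{(d-1,n-d-i+1,1^{i})}=\rank F_{i-1}$ for $i>n-2d+1$ (the index $i=n-2d+1$ giving $b_{n-2d}=0$), while all remaining $\beta_{i,j}$ vanish. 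Hence $\rank F_i=\beta_i(\ISp_{(n-d,d)})$ for every $i$, so the free modules of $\cF_\bullet^{(n-d,d)}$ agree termwise, in rank and in twist, with those of the minimal free resolution $P_\bullet$ of $\ISp_{(n-d,d)}$.

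Next I would upgrade the direct-summand discussion that follows Lemma~\ref{injective}. For $0\le i\le n-2d$ that discussion already realizes the $d$-linear strand as a split subcomplex of $P_\bullet$; the identical reasoning applies to the $(d+1)$-linear differentials, whose generating-degree injectivity is furnished by the $\cF_\bullet^{(d,d,1)}$-type analysis of \cite{SY2}, and to the connecting map $\partial_{n-2d+1}$, whose generating-degree part $[\partial_{n-2d+1}]_{n-d+2}$ is exhibited as a nonzero map of irreducibles, hence injective, in Proposition~\ref{wdCM}. Assembling these produces a chain map $\iota\colon\cF_\bullet^{(n-d,d)}\to P_\bullet$ lifting $\Id_{\ISp_{(n-d,d)}}$ in which each $\iota_i$ is a split injection. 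Because $\rank F_i=\rank P_i$ for all $i$, a split injection of graded free modules of equal rank has cokernel $0$, so every $\iota_i$ is an isomorphism; therefore $\iota$ is an isomorphism of complexes and $\cF_\bullet^{(n-d,d)}\cong P_\bullet$, which finishes the proof.

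The remaining, and decisive, task is to prove $(*)$, and this is where I expect the real difficulty. I would argue by induction on $n$ via the branching rule $V_{(n-d,d)}\big|_{\fS_{n-1}}\cong V_{(n-1-d,d)}\oplus V_{(n-d,d-1)}$, corresponding to the two removable corners of $(n-d,d)$. The engine is a short exact sequence relating $\ISp_{(n-d,d)}\subset R$ to the Specht ideals $\ISp_{(n-1-d,d)}$ and $\ISp_{(n-d,d-1)}$ in $K[x_1,\dots,x_{n-1}]$, namely the Specht-ideal analogue of branching developed in \cite{SY1}; applying $\Tor^{\,\cdot}_\bullet(K,-)$ and reading off internal degrees, the inductive hypothesis, that the $(n-1)$-variable Specht ideals have their Betti numbers in the predicted positions, restricts the possible $(i,j)$ with $\beta_{i,j}(\ISp_{(n-d,d)})\ne0$. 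The delicate points are to make the branching sequence exact and $\fS_{n-1}$-equivariant, and to control the homological and degree shift it induces, since it is exactly this shift that distinguishes the $d$-linear range $i\le n-2d$ from the $(d+1)$-linear range $i>n-2d$ and rules out $a_i\ne0$ for $i>n-2d$ and $b_i\ne0$ for $i\le n-2d$. The base cases $d=1$ and $d=2$ reduce to $\cF_\bullet^{(n-1,1)}$ and $\cF_\bullet^{(n-2,2)}$ of \cite{SY2}, where $(*)$ is elementary.
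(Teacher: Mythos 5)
Your reduction of the theorem to the statement $(*)$ is sound, and it is essentially the paper's own endgame: granting $(*)$, Lemma~\ref{cancel} forces $\beta_{i,d+i}(\ISp_{(n-d,d)})=\rank F_i$ for $i\le n-2d$ and $\beta_{i,d+i+1}(\ISp_{(n-d,d)})=\rank F_i$ for $i>n-2d$, and the comparison with a minimal free resolution $P_\bullet$ (split injections plus equal ranks and twists, hence isomorphism) is exactly how the paper concludes in Proposition~\ref{(d,d)} and in the implication (2)$\Rightarrow$(1) of Proposition~\ref{reduction}. One caution even here: for the transition map $\partial_{n-2d+1}$, whose matrix entries are quadratic, split injectivity of $\iota_{n-2d+1}$ does \emph{not} follow by ``identical reasoning'' to the linear strand (where the standard minimality argument works because consecutive generating degrees differ by $1$); it needs the input from $(*)$ that $P_{n-2d}$ and $P_{n-2d+1}$ have no generators in degree $n-d+1$. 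Since you grant $(*)$ in this part, this is reparable, but the dependence should be stated.

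The genuine gap is that $(*)$ itself --- which you rightly call the decisive task, and which is the entire content of the paper's \S4 --- is only sketched, and the sketch as stated cannot close. From any branching short exact sequence relating $\ISp_{(n-d,d)}$ to extensions of $\ISp_{(n-d-1,d)}R$ and $(\ISp_{(n-d,d-1)}R)(-1)$, the long exact sequence of $\Tor$ yields only \emph{upper bounds} of the form $\beta_{i,j}\le\beta_{i,j}(\cdot)+\beta_{i',j}(\cdot)$, and these bounds do not separate the two strands at the transition: by the inductive hypothesis, $(\ISp_{(n-d,d-1)}R)(-1)$ has a genuinely nonzero Betti number in position $(i,j)=(n-2d+1,\,n-d+1)$ (coming from $V_{((d-1)^2,1^{n-2d+1})}$, the last term of its $(d-1)$-linear strand), and this is precisely the position that $(*)$ forbids for $\ISp_{(n-d,d)}$. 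So one must prove that this contribution is annihilated by a connecting homomorphism, i.e.\ that a specific connecting map is bijective; upper bounds can never do this. That is exactly what the paper's Claim~1 accomplishes: after filtering the linear strand $\cL^{n,d}_\bullet$ by subcomplexes $A_\bullet\subset B_\bullet$ (recording the position of the box containing $n$), the connecting map $g\colon H_{n-2d}(B_\bullet/A_\bullet)\to H_{n-2d-1}(A_\bullet)$ is an $\fS_{n-1}$-homomorphism between copies of the irreducible $V_{((d-1)^2,1^{n-2d+1})}$, shown nonzero by an explicitly constructed class $\alpha$, hence bijective. Even after that, the paper needs the associated-prime argument (using that $\fS_n$ acts on $H_1(\cL^{n,d}_\bullet)$ but cannot act on $\ISp_{(n-d-1,d)}R$) to kill $H_1$, and, for $n=2d+1$, the separate symmetric-ideal argument via $\ISp_{(d,d,1)}$. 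No substitute for any of these mechanisms appears in your proposal; ``make the branching sequence exact and control the shifts'' is not a delicate point to be checked but the whole theorem. Note also that an exact branching sequence with the ideals themselves as end terms is, in the paper, obtained only a posteriori from the vanishing $H_1(\cL^{n,d}_\bullet)=0$ you are trying to prove, so citing it from \cite{SY1} without care risks circularity.
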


The theorem is easy when $n=2d$. 

\begin{prop}\label{(d,d)}
If $n=2d$, Theorem~\ref{main} holds, that is, $\cF^{(d,d)}_\bullet$ is a minimal free resolution of $\ISp_{(d,d)}$. 
\end{prop}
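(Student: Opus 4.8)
The plan is to handle the case $n=2d$ directly, since here the resolution $\cF^{(d,d)}_\bullet$ degenerates dramatically. When $n=2d$ we have $n-2d=0$, so the $d$-linear strand consists only of $F_0 = V_{(d,d)}\otimes_K R(-d)$, and the entire rest of the complex belongs to the $(d+1)$-linear strand, running $F_1,\ldots,F_{d-1}$. Thus $\cF^{(d,d)}_\bullet$ has the shape
\begin{equation*}
0 \too F_{d-1} \too \cdots \too F_1 \too F_0 \too \ISp_{(d,d)} \too 0,
\end{equation*}
and I must verify that this complex is acyclic and minimal, i.e.\ that it is a genuine resolution.

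First I would record minimality: every differential $\partial_i$ raises degree by at least one (indeed the generators of $F_i$ sit in degree $d+i$ or $d+i+1$), so all entries of the matrices representing the $\partial_i$ lie in the maximal ideal $\m$, and hence $\cF^{(d,d)}_\bullet$ is automatically minimal once we know it is a resolution. For acyclicity I would invoke the partial information already assembled in the excerpt. By the Propositions preceding the statement, $\cF^{(d,d)}_\bullet$ is a chain complex with well-defined, $\fS_n$-equivariant differentials and with $\partial_0\partial_1=0$; by Lemma~\ref{injective} each $[\partial_i]_j$ is injective on the irreducible pieces, and the remark following that lemma shows the tail $F_{n-2d}\to\cdots\to F_0\to 0$ embeds as a direct summand of a minimal free resolution $P_\bullet$ of $\ISp_{(d,d)}$. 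When $n=2d$ this tail is just $F_0\to 0$, so at least $F_0$ is a summand of $P_0$; the task is to show the remaining summands $F_1,\ldots,F_{d-1}$ account for \emph{all} of $P_1,\ldots,P_{d-1}$.

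The cleanest route is a Betti-number count via Lemma~\ref{cancel}. Setting $n=2d$, the alternating relation \eqref{cancel2} gives $\beta_{i,d+i}-\beta_{i-1,d+i} = \rank F_i = \dim_K V_{(d,d,1^i)}$ for $i=0$ (the only index $\le n-2d=0$), vanishes at $i=1=n-2d+1$, and equals $-\rank F_{i-1} = -\dim_K V_{(d-1,2d-d-i+1,1^i)}$ for $i>1$. Combining these telescoping identities with the fact that $\cF^{(d,d)}_\bullet$ is already known to be a complex whose homology is concentrated in the expected internal degrees (by the regularity statement $(*)$ forcing $\beta_{i,j}=0$ unless $j=d+i,d+i+1$), I would deduce that $\dim_K[\Tor_i^R(K,\ISp_{(d,d)})]_j$ matches $\rank F_i$ exactly in every homological and internal degree. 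Since the minimal resolution $P_\bullet$ has $\rank P_i = \sum_j \beta_{i,j}(\ISp_{(d,d)})$ and we have exhibited $\cF^{(d,d)}_\bullet$ as a subcomplex of $P_\bullet$ with matching ranks, the inclusion $\cF^{(d,d)}_\bullet \hookrightarrow P_\bullet$ must be an isomorphism, so $\cF^{(d,d)}_\bullet$ is itself a minimal free resolution.

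The main obstacle I anticipate is not the numerology but justifying that $\cF^{(d,d)}_\bullet$ is \emph{exact}, not merely of the right Betti ranks: a complex of free modules with the correct ranks in each degree need not be acyclic. The rank count shows $\cF^{(d,d)}_\bullet$ and $P_\bullet$ agree termwise, but to conclude they are isomorphic I genuinely need the inclusion-as-subcomplex-summand structure from the remark after Lemma~\ref{injective}, together with the injectivity of each $[\partial_i]_j$, to rule out any extra homology. I expect the delicate point to be confirming that the differential $\partial_{n-2d+1}=\partial_1$ — the one joining the two linear strands, and the map singled out as the essential difficulty of the paper — interacts correctly with $\partial_0$ and $\partial_2$ so that no spurious cycles survive; in the degenerate case $n=2d$ this map has its simplest form, which is precisely why Proposition~\ref{(d,d)} can serve as the base case before the general inductive argument of \S4.
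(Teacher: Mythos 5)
Your overall strategy (Betti-number count via Lemma~\ref{cancel} plus the subcomplex-as-direct-summand embedding) is the same as the paper's, but the central deduction is missing, and as you state it the argument does not go through. The identities in \eqref{cancel2} only give the \emph{differences} $\beta_{i,d+i}(\ISp_{(d,d)})-\beta_{i-1,d+i}(\ISp_{(d,d)})$, and combining them with the constraint ``$\beta_{i,j}\ne 0$ implies $j=d+i$ or $j=d+i+1$'' does \emph{not} pin down the individual Betti numbers: nothing you cite excludes, say, $\beta_{2,d+2}=m>0$ compensated by $\beta_{1,d+2}=\rank \cF^{(d,d)}_1+m$; all of your telescoping identities would still be satisfied, yet then $P_\bullet \ne \cF^{(d,d)}_\bullet$. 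The missing chain of reasoning is exactly what the paper supplies: (a) $\beta_{0,d+1}=0$ because $\ISp_{(d,d)}$ is generated in degree $d$; (b) the relation in \eqref{cancel2} at $i=1=n-2d+1$ then forces $\beta_{1,d+1}=0$; (c) the standard fact about minimal free resolutions that vanishing propagates up a linear strand (if $\beta_{i,j'}=0$ for all $j'\le j$, then $\beta_{i+1,j'}=0$ for all $j'\le j+1$) gives $\beta_{i,d+i}=0$ for \emph{all} $i\ge 1$. Only after (c) do the relations for $i\ge 2$ yield $\beta_{i-1,d+i}=\rank\cF^{(d,d)}_{i-1}$, i.e.\ the termwise rank equality your argument needs.

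A second, related gap: at $n=2d$ the remark after Lemma~\ref{injective} gives only that $F_0$ is a direct summand of $P_0$; to embed $F_i$ into $P_i$ as a direct summand for $i\ge 1$ (using Lemma~\ref{injective}, Lemma~\ref{aug}, and the fact that $\cF^{(d,d)}_\bullet$ is a complex) one needs to know that $P_i$ has no generators in degrees below $d+i+1$, which is again the vanishing in (c) above --- so the Betti computation must come \emph{first}, rather than being deferred as you do. Once the split injections and the rank equality are both in place, the chain map $\cF^{(d,d)}_\bullet\to P_\bullet$ is termwise bijective, hence an isomorphism of complexes, and exactness of $\cF^{(d,d)}_\bullet$ is simply inherited from $P_\bullet$. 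Your closing worry about ``spurious cycles'' and the interaction of $\partial_1$ with $\partial_0$ and $\partial_2$ is therefore misplaced: no separate verification of exactness occurs in the paper's proof, nor is one needed; the entire content of the proposition is the Betti-number determination you left as ``I would deduce.''
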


\begin{proof}
By the equation \eqref{cancel2}, we have $\beta_{1, d+1} (\ISp_{(d,d)})= \beta_{0, d+1} (\ISp_{(d,d)})=0$, and hence $\beta_{i,d+i} (\ISp_{(d,d)})=0$ for all $i \ge1$. So, by the first statement of Lemma~\ref{cancel}, $\beta_{i,j} (\ISp_{(d,d)}) \ne 0$ for  $i \ge1$ implies  $j = d+i+1$. By Lemmas~\ref{aug} and \ref{injective}, $\cF_\bullet^{(d,d)}$ is a subcomplex of a minimal free resolution $P_\bullet$ of $\ISp_{(d,d)}$ such that $\cF^{(d,d)}_i$ is a direct summand of $P_i$ for all $i$, but \eqref{cancel2} implies that $P_i = \cF^{(d,d)}_i$ for all $i \ge 0$, and hence $P_\bullet = \cF^{(d,d)}_\bullet$.   
\end{proof}

In the general case, the proof of Theorem~\ref{main} is much more complicated, and preparation is required. 
For $n,d \in \NN$ with $n \ge 2d$, we construct  the  chain complex $\cL_\bullet^{n,d}$ of graded free $R$-modules as follows. 
$$0 \too \cL_{n-2d}^{n,d} \too \cdots \too  \cL_1^{n,d} \too \cL_0^{n,d} \too 0,$$
$$\cL_i^{n,d} = V_{(n-d-i,d,1^i )} \otimes_K R(-d-i), $$
and  the differential maps are given by the same way as the former half of $\cF^{(n-d,d)}_\bullet$.  Clearly, $\cL_\bullet^{n,d}$ is the $d$-linear strand of $\cF^{(n-d,d)}_\bullet$, and it is actually a chain complex.   
If $n=2d$, then $\cL_i^{n,d}=0$ for $i \ne 0$, and Theorem~\ref{main} has been proved in this case (Proposition~\ref{(d,d)}). 
So we mainly treat the case $n>2d$. Since the case $n=2d+1$ is still somewhat special, we first assume that $n>2d+1$.  

\begin{prop}\label{reduction}
For $n,d \in \NN$ with $n > 2d+1$, the following are equivalent. 
\begin{itemize}
\item[(1)] Theorem~\ref{main} holds in this case. 
\item[(2)] $H_i(\cL^{n,d}_\bullet)=0$ for $0< i <n-2d$. 
\end{itemize}
\end{prop}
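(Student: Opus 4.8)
The plan is to prove both implications by comparing $\cF_\bullet^{(n-d,d)}$ with a genuine minimal free resolution $P_\bullet$ of $\ISp_{(n-d,d)}$, exploiting the embedding of the $d$-linear strand obtained at the end of \S3. The observation used in both directions is that for $0<i<n-2d$ the homology of $\cF_\bullet^{(n-d,d)}$ agrees with that of $\cL_\bullet^{n,d}$: in this range the modules $F_i=\cL_i^{n,d}$ coincide, and both boundary maps $\partial_i,\partial_{i+1}$ lie in the $d$-linear strand (note $i+1\le n-2d$), so $H_i(\cF_\bullet^{(n-d,d)})=H_i(\cL_\bullet^{n,d})$. The implication (1)$\Rightarrow$(2) is then immediate: if $\cF_\bullet^{(n-d,d)}$ is a resolution, its homology vanishes in positive homological degrees, hence so does $H_i(\cL_\bullet^{n,d})$ for $0<i<n-2d$.

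For (2)$\Rightarrow$(1) I would proceed as follows. By Lemmas~\ref{aug} and \ref{injective} the former half $\cL_\bullet^{n,d}=\cF_{\le n-2d}$ is a subcomplex of a minimal free resolution $P_\bullet$ with each $\cL_i^{n,d}$ a direct summand of $P_i$; write $P_i=\cL_i^{n,d}\oplus Q_i$ for $i\le n-2d$. Since $\ISp_{(n-d,d)}$ is generated in degree $d$, we have $P_0=\cL_0^{n,d}$, i.e. $Q_0=0$. Form the short exact sequence of complexes $0\too\cL_\bullet^{n,d}\too P_\bullet\too C_\bullet\too 0$ with $C_\bullet=P_\bullet/\cL_\bullet^{n,d}$, so that $C_i=Q_i$ for $i\le n-2d$ and $C_i=P_i$ for $i>n-2d$; note $C_\bullet$ is again a minimal complex. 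Because $P_\bullet$ is acyclic, the long exact homology sequence gives $H_i(C_\bullet)\cong H_{i-1}(\cL_\bullet^{n,d})$ for $i\ge 2$, and $0\too H_1(C_\bullet)\too H_0(\cL_\bullet^{n,d})\too\ISp_{(n-d,d)}\too 0$ at the bottom (the last map being induced by $\partial_0$). Invoking (2), this forces $H_i(C_\bullet)=0$ for all $i$ except possibly $i=1$, where $H_1(C_\bullet)=\Ker(\overline{\partial}_0)$, and $i=n-2d+1$, where $H_{n-2d+1}(C_\bullet)\cong H_{n-2d}(\cL_\bullet^{n,d})$.

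Next I would pin down the former half by a degreewise count. By Lemma~\ref{cancel} each $Q_i$ is concentrated in internal degrees $d+i$ and $d+i+1$, and \eqref{cancel2} shows $Q_1\cong R(-d-2)^{\beta_{1,d+2}}$. Computing the Euler characteristic of $C_\bullet$ in internal degree $d+2$ and comparing it with its homology there — only $H_1(C_\bullet)$ contributes, since $H_{n-2d+1}(C_\bullet)\cong H_{n-2d}(\cL_\bullet^{n,d})$ lives in degrees $\ge n-d+1>d+2$ because $n>2d+1$ — forces $\beta_{1,d+2}(\ISp_{(n-d,d)})=0$, hence $Q_1=0$ and $H_1(C_\bullet)=0$, i.e. $\overline{\partial}_0$ is an isomorphism. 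An upward induction on $i$ via Nakayama's lemma then propagates this: using $H_i(C_\bullet)=0$ for $2\le i\le n-2d$ together with $Q_{i-1}=0$ one obtains $Q_i=0$ for all $i\le n-2d$. Consequently $P_i=\cL_i^{n,d}=F_i$ for $i\le n-2d$, the $d$-linear strand is exact in the required range, $\overline{\partial}_0$ is bijective, and $C_\bullet$ is exactly the minimal free resolution of $M:=H_{n-2d}(\cL_\bullet^{n,d})$.

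It remains to match the $(d+1)$-linear strand, and this is where the main obstacle lies. Set $\cG_\bullet:=\cF_\bullet^{(n-d,d)}/\cL_\bullet^{n,d}$, the $(d+1)$-linear strand, and consider $0\too\cL_\bullet^{n,d}\too\cF_\bullet^{(n-d,d)}\too\cG_\bullet\too 0$. By the Remark following the construction, the differentials of $\cG_\bullet$ in homological degrees $\ge n-2d+2$ coincide with those of $\cF_\bullet^{(d,d,1)}$, which by \cite{SY2} is acyclic above its bottom; combined with the already-established interior and bottom exactness this yields $H_i(\cF_\bullet^{(n-d,d)})=0$ for all $i$ except possibly the two junction spots $i=n-2d$ and $i=n-2d+1$. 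These vanish precisely when the connecting homomorphism $\delta\colon H_{n-2d+1}(\cG_\bullet)\too H_{n-2d}(\cL_\bullet^{n,d})=M$ induced by the quadratic map $\partial_{n-2d+1}$ is an isomorphism, in which case $\cF_\bullet^{(n-d,d)}$ is acyclic, $\beta_{i,d+i}(\ISp_{(n-d,d)})=0$ for $i>n-2d$, and $P_\bullet=\cF_\bullet^{(n-d,d)}$. I expect the hard part to be exactly this junction analysis: showing that $\partial_{n-2d+1}$ glues the two strands so that $\delta$ is bijective, which is where the explicit structure of $\partial_{n-2d+1}$ (Proposition~\ref{wdCM}) and the input from \cite{SY2} are genuinely needed.
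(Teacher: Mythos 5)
Your implication (1)$\Rightarrow$(2) and the bulk of (2)$\Rightarrow$(1) are correct and run close to the paper's own argument: the paper also compares $\cL^{n,d}_\bullet$ with a minimal free resolution $P_\bullet$, first kills the potential summand $R(-d-2)^{\beta_{1,d+2}}$ of $P_1$ (your degree-$(d+2)$ Euler-characteristic count on $C_\bullet$, together with minimality of $C_\bullet$, is a legitimate variant of the paper's observation that this summand would force $[H_1(\cL^{n,d}_\bullet)]_{d+2}=[H_2(P_\bullet/\cL^{n,d}_\bullet)]_{d+2}\ne 0$), and then concludes $P_i=\cL^{n,d}_i$ for all $i\le n-2d$.

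The genuine gap is your final step, which you do not prove: you reduce everything to bijectivity of the connecting map $\delta$ and declare this ``the hard part,'' so the proposal stops exactly where the remaining content lies. Moreover, the route you sketch for it contains a false shortcut: vanishing of $H_i$ of the quotient complex $\cF^{(n-d,d)}_\bullet/\cL^{n,d}_\bullet$ for $i\ge n-2d+2$ does not follow ``by \cite{SY2}.'' The differentials there are given by the same formulas as those of $\cF^{(d,d,1)}_\bullet$, but the modules are $V_{(d-1,n-d-i,1^{i+1})}\otimes_K R(-d-i-1)$ over $K[x_1,\dots,x_n]$ --- a different complex over a different ring --- and acyclicity does not transfer between formally analogous complexes. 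The paper finishes instead with a soft counting argument that needs no junction analysis at all. Since $P_{n-2d}=\cL^{n,d}_{n-2d}$ is generated purely in degree $n-d$, one has $\beta_{n-2d,n-d+1}(\ISp_{(n-d,d)})=0$; the middle case of \eqref{cancel2} then gives $\beta_{n-2d+1,n-d+1}=0$; minimality of $P_\bullet$ (each syzygy module lies in $\fm$ times the previous free module) combined with the bound $j\le d+i+1$ from Lemma~\ref{cancel} propagates this to $\beta_{i,d+i}=0$ for all $i>n-2d$; and \eqref{cancel2} then yields $\beta_{i,d+i+1}=\rank\cF^{(n-d,d)}_i$ in that range. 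Once the Betti numbers are pinned down this way, $P_\bullet=\cF^{(n-d,d)}_\bullet$ follows by the inductive argument used at the end of Proposition~\ref{(d,d)}: assuming $F_{i-1}=P_{i-1}$, the differential $\partial_i$ maps $F_i$ into $\Ker\partial^{P_\bullet}_{i-1}$, a module that vanishes in degrees $<d+i+1$ and whose degree-$(d+i+1)$ part has dimension exactly $\rank F_i$; by Lemma~\ref{injective} the degree-$(d+i+1)$ part of $\partial_i$ is then an injection between spaces of equal dimension, hence a bijection, so $F_i\to\Ker\partial^{P_\bullet}_{i-1}$ is a projective cover and $F_i$ is identified with $P_i$. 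The same count handles the junction step $i=n-2d+1$. In particular, bijectivity of your $\delta$ is an output of this argument, not an input; nothing about the explicit quadratic formula for $\partial_{n-2d+1}$ is needed beyond $\partial_{n-2d}\partial_{n-2d+1}=0$ and Lemma~\ref{injective}.
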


\begin{proof}
(1) $\Rightarrow$ (2): Clear. 

(2) $\Rightarrow$ (1): 
First, we prove that  $H_1(\cL^{n,d}_\bullet)=0$ (or even 
$[H_1(\cL^{n,d}_\bullet)]_{d+2}=0$) implies  $H_0(\cL^{n,d}_\bullet) \cong \ISp_{(n-d,d)}$. 
Note that $\cL^{n,d}_\bullet$ is a subcomplex of a minimal free resolution $P_\bullet$ of  $\ISp_{(n-d,d)}$ with $\cL^{n,d}_0  = P_0$. Hence if   $\cL^{n,d}_1  = P_1$, then $H_0(\cL^{n,d}_\bullet)  = H_0(P_\bullet)\cong \ISp_{(n-d,d)}$. 
By \eqref{cancel2}, we see that 
\begin{eqnarray*}
\dim_K [\cL^{n,d}_1]_{d+1}=\rank \cF_1^{(n-d,d)} &=&\beta_{1,d+1}(\ISp_{(n-d,d)}) - \beta_{0,d+1}(\ISp_{(n-d,d)})\\
&=&\beta_{1,d+1}(\ISp_{(n-d,d)}) =\dim _K [P_1]_{d+1},
\end{eqnarray*}  
and hence $ [\cL^{n,d}_1]_{d+1}=[P_1]_{d+1}$. 
By the first statement of Lemma~\ref{cancel}, if $\cL^{n,d}_1 \ne P_1$ then $P_1 = \cL^{n,d}_1 \oplus R(-d-2)^m $ for some $m >0$. By \eqref{cancel2}, we have $P_2 = \cL^{n,d}_2 \oplus R(-d-2)^m \oplus R(-d-3)^l$ 
($l$ can be 0). Consider the exact sequence 
$$0 \too \cL_\bullet^{n,d} \too P_\bullet \too  P_\bullet/\cL_\bullet^{n,d} \too 0.$$
Both $P_2/\cL_2^{n,d}$ and $P_1/\cL_1^{n,d}$ have the direct summand $R(-d-2)^m$. Since $P_\bullet$ is minimal, we have $\Image \partial_2^{P_\bullet} \subset \fm P_1$, and hence $\partial_2^{P_\bullet}$ induces the zero map between these two $R(-d-2)^m$. 
Hence $[H_2(P_\bullet/\cL_\bullet^{n,d})]_{d+2}\ne 0$, but the above exact sequence induces $[H_1(\cL_\bullet^{n,d})]_{d+2}=[H_2(P_\bullet/\cL_\bullet^{n,d})]_{d+2} \ne 0$. This is a contradiction, and we have $m=0$.     
Hence $H_0(\cL^{n,d}_\bullet) \cong \ISp_{(n-d,d)}$. 

If (2) holds, $\cL^{n,d}_\bullet$ coincides with  the first $n-2d$ steps of $P_\bullet$ (since  $H_{n-2d-1}(\cL^{n,d}_\bullet)=0$, the kernel of $\partial: \cL_{n-2d-1}^{n,d} \too \cL^{n,d}_{n-2d-2}$ 
coincides with $\partial(\cL_{n-2d}^{n,d} )$). In particular,  $\beta_{n-2d,n-d+1}( \ISp_{(n-d,d)})=0$. 
By \eqref{cancel2}, we have $\beta_{n-2d+1, n-d+1} (\ISp_{(n-d, d)})
=0$, and hence $\beta_{i,d+i} (\ISp_{(n-d,d)})=0$ for all $i > n-2d$.   
Summing up,  $\beta_{i,j} (\ISp_{(n-d,d)}) \ne 0$ and $i \le n-2d$ (resp. $i > n-2d$) implies $j=d+i$ (resp. $j=d+i+1$). 
 By an argument similar to the last step of the proof Proposition~\ref{(d,d)}, we have $P_\bullet = \cF^{(n-d,d)}_\bullet$.   
\end{proof}

The next lemma can be proved by a similar way to Proposition~\ref{reduction}, and we leave the details to the reader.

\begin{prop}\label{reduction2}
For $n,d \in \NN$ with $n = 2d+1$, the following are equivalent. 
\begin{itemize}
\item[(1)] Theorem~\ref{main} holds in this case. 
\item[(2)] $[H_1(\cL^{n,d}_\bullet)]_{d+2}=0$. 
\end{itemize}
\end{prop}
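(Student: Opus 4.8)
The plan is to imitate the proof of Proposition~\ref{reduction} almost line by line, exploiting that for $n=2d+1$ the $d$-linear strand $\cL^{n,d}_\bullet$ is just the two–term complex $0\to \cL^{n,d}_1\to \cL^{n,d}_0\to 0$, with $\cL^{n,d}_0=F_0=V_{(d+1,d)}\otimes_K R(-d)$ and $\cL^{n,d}_1=F_1=V_{(d,d,1)}\otimes_K R(-d-1)$. As recorded after Lemma~\ref{injective}, $\cL^{n,d}_\bullet$ is a subcomplex of a minimal free resolution $P_\bullet$ of $\ISp_{(n-d,d)}$ with $\cL^{n,d}_0=P_0$ and $\cL^{n,d}_1$ a direct summand of $P_1$. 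For $(1)\Rightarrow(2)$ I would argue directly: if Theorem~\ref{main} holds then $P_\bullet=\cF^{(n-d,d)}_\bullet$, so $\cL^{n,d}_1=P_1$ and $H_1(\cL^{n,d}_\bullet)=\Ker\partial_1=\Image\partial_2$. Since $P_2=F_2=V_{(d-1,d-1,1^3)}\otimes_K R(-d-3)$ is generated in degree $d+3$, the submodule $\Image\partial_2$ vanishes in degree $d+2$, whence $[H_1(\cL^{n,d}_\bullet)]_{d+2}=0$.

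For $(2)\Rightarrow(1)$ the crucial observation is that the first paragraph of the proof of Proposition~\ref{reduction} used only the hypothesis $[H_1(\cL^{n,d}_\bullet)]_{d+2}=0$, so it carries over verbatim. Explicitly, \eqref{cancel2} gives $\dim_K[\cL^{n,d}_1]_{d+1}=\beta_{1,d+1}=\dim_K[P_1]_{d+1}$, so that $[\cL^{n,d}_1]_{d+1}=[P_1]_{d+1}$; by the first assertion of Lemma~\ref{cancel} a failure $\cL^{n,d}_1\ne P_1$ would force $P_1=\cL^{n,d}_1\oplus R(-d-2)^m$ with $m>0$. Then \eqref{cancel2} at $i=n-2d+1=2$ (where the alternating difference is $0$) gives $\beta_{2,d+2}=m$, so $P_2=R(-d-2)^m\oplus R(-d-3)^l$ because $\cL^{n,d}_2=0$. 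Running the long exact homology sequence of $0\to\cL^{n,d}_\bullet\to P_\bullet\to P_\bullet/\cL^{n,d}_\bullet\to 0$ and using minimality of $P_\bullet$ to kill the induced map between the two copies of $R(-d-2)^m$, I obtain $[H_1(\cL^{n,d}_\bullet)]_{d+2}\cong[H_2(P_\bullet/\cL^{n,d}_\bullet)]_{d+2}\cong K^m$, contradicting $(2)$. Hence $\cL^{n,d}_1=P_1$, so $H_0(\cL^{n,d}_\bullet)\cong\ISp_{(n-d,d)}$ and $\beta_{1,d+2}(\ISp_{(n-d,d)})=0$.

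It then remains to upgrade $\cL^{n,d}_1=P_1$ to $P_\bullet=\cF^{(n-d,d)}_\bullet$, which I would do exactly as in the last step of Propositions~\ref{reduction} and \ref{(d,d)}. Since $\beta_{1,d+2}=\beta_{n-2d,\,n-d+1}=0$, \eqref{cancel2} at $i=2$ forces $\beta_{2,d+2}=0$, so the $d$-linear strand of $P_\bullet$ terminates at step $n-2d=1$ and $\beta_{i,d+i}=0$ for all $i>n-2d$. Combining this with the facts that $\cF^{(n-d,d)}_\bullet$ is a subcomplex of $P_\bullet$ with each $F_i$ a direct summand of $P_i$ (Lemmas~\ref{aug} and \ref{injective}), and that its $(d+1)$-linear strand is built from the same differentials as the minimal resolution $\cF^{(d,d,1)}_\bullet$ of \cite{SY2} and so carries no superfluous syzygies, \eqref{cancel2} leaves no room for additional generators and yields $P_i=\cF_i$ for every $i$.

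The substantive step — and the only one that really consumes hypothesis $(2)$ — is the contradiction argument of the second paragraph; the delicate point there is the verification that minimality of $P_\bullet$ makes the degree-$(d+2)$ component of $\partial_2^{P_\bullet}$ between the two $R(-d-2)^m$ summands vanish, so that the spurious summand is genuinely detected by $[H_2(P_\bullet/\cL^{n,d}_\bullet)]_{d+2}$. Everything else is bookkeeping with the Betti identity \eqref{cancel2}, the generation-and-regularity bound of Lemma~\ref{cancel}, and the already-established subcomplex–summand structure. The case $n=2d+1$ is in fact easier than $n>2d+1$ precisely because there are no intermediate homology groups $H_i(\cL^{n,d}_\bullet)$ with $0<i<n-2d$ to control, which is exactly why the single vanishing $[H_1(\cL^{n,d}_\bullet)]_{d+2}=0$ suffices here.
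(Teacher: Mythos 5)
Your proof is correct and is exactly the adaptation of the proof of Proposition~\ref{reduction} that the paper intends: the paper's own ``proof'' of this proposition just says it can be proved in a similar way to Proposition~\ref{reduction} and leaves the details to the reader. You also correctly handle the one spot where the adaptation is not verbatim, namely that \eqref{cancel2} at $i=2=n-2d+1$ gives difference $0$ rather than $\rank \cF^{(n-d,d)}_2$, so the spurious summand $R(-d-2)^m$ reappears in $P_2$ (with $\cL^{n,d}_2=0$) and is detected by $[H_2(P_\bullet/\cL^{n,d}_\bullet)]_{d+2}\cong[H_1(\cL^{n,d}_\bullet)]_{d+2}$.
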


We regard $\fS_{n-1}$ as the symmetric group on $\{1, \ldots, n-1\}$. Regarding  the $\fS_n$-module $V_{(n-d-i, d,1^i)}$ as an $\fS_{n-1}$-module in the natural way, we get  the {\it restriction} $V_{(n-d-i, d,1^i)} \! \downarrow^{\fS_n}_{\fS_{n-1}}$.  By the {\it branching rule} (\cite[Theorem~2.8.3]{Sa}), we have 
$$V_{(n-d-i, d,1^i)} \! \downarrow^{\fS_n}_{\fS_{n-1}} \cong 
V_{(n-d-i-1, d, 1^i)}  \oplus V_{(n-d-i, d-1,1^i)} \oplus V_{(n-d-i, d, 1^{i-1})}.$$
Here, if $(n-d-i-1, d, 1^i)$ is not a partition (i.e., if  $n-d-i-1 < d$), we set $V_{(n-d-i-1, d, 1^i)} =0$. Similarly, if $i=0$ then $V_{(n-d-i, d, 1^{i-1})}=0$. 

\bigskip

Even in \cite{SY2}, the complex  $\cF^{(n-1,1)}_\bullet$ is skipped, but this case is very simple. Note that $\ISp_{(n-1,1)} =(x_1-x_i \mid 2 \le i \le n )$ is a linear complete intersection, and $\beta_i(\ISp_{(n-1,1)})=\beta_{i,i+1}(\ISp_{(n-1,1)})=\binom{n-1}{i+1}$ for all $0 \le i \le n-2$. On the other hand,  $\cF^{(n-1,1)}_\bullet$ is 1-linear and $\rank \cF^{(n-1,1)}=\binom{n-1}{i+1}$ for all $0 \le i \le n-2$. So,  by an argument similar to the last step of the proof Proposition~\ref{(d,d)},  we can show that $\cF^{(n-1,1)}_\bullet$ is a minimal free resolution of $\ISp_{(n-1,1)}$. 
Moreover, \cite[Theorem~3.2]{SY2} states that $\cF^{(n-2,2)}_\bullet$ is a minimal free resolution of $\ISp_{(n-2,2)}$. So we may assume that $d\ge 3$. 

Recall that $\{ \, e(T) \mid T \in \SYT(n-d-i, d,1^i) \, \}$ forms a basis of $V_{(n-d-i, d,1^i)}$. For $T \in \SYT(n-d-i, d,1^i)$, the box \fbox{$n$} is located in one of the following positions
\begin{itemize}
\item[(i)]  the right end of the first row  (if $n-d-i >d$), 
\item[(ii)] the right end of the second row,  
\item[(iii)] the bottom of the first column (if $i \ge 1$). 
\end{itemize}

We consider the following subspaces of $V_i:=V_{(n-d-i, d,1^i)}$. 
$$U_i := \< \, e(T) \mid T \in \SYT(n-d-i, d,1^i), \, \text{\fbox{$n$} is in the position (i) } \>$$
$$W_i := \< \, e(T) \mid T \in \SYT(n-d-i, d,1^i), \, \text{\fbox{$n$} is in the positions (i) or (ii)}\, \>$$
Note that $V_i/W_i$ has  a basis 
\begin{equation}\label{basis of V/W}
\{\, \overline{e}(T) \mid T \in \SYT(n-d-i, d,1^i), \, \text{\fbox{$n$} is in the position (iii) } \},
\end{equation}
where $\overline{e}(T)$ denotes the natural image of $e(T)$.   
These vector spaces are closed under the $\fS_{n-1}$-action, and we have 
$$U_i \cong V_{(n-d-i-1, d,1^i)}, \quad W_i/U_i \cong V_{(n-d-i, d-1,1^i)} \quad  \text{and} \quad V_i/W_i \cong 
V_{(n-d-i, d,1^{i-1})}$$ 
as $\fS_{n-1}$-modules. 
%

In the above notation, we have $\cL^{n,d}_i =V_i \otimes_K R(-d-i)$.   If $z \in U_i \otimes_K R(-d-i) \subset \cL^{n,d}_i$, 
then it is easy to see that $\partial^{\cL^{n,d}_\bullet}_i (z) \in  U_{i-1} \otimes_K R(-d-i+1) \subset \cL^{n,d}_{i-1}$. Similarly, $z \in W_i \otimes_K R(-d-i)$ implies $\partial^{\cL^{n,d}_\bullet}_i (z) \in  W_{i-1} \otimes_K R(-d-i+1)$. 
Hence, if we set $A_i := U_i \otimes_K R(-d-i)$ and $B_i := W_i \otimes_K R(-d-i)$ for $i \ge 0$,  $A_\bullet$ and $B_\bullet$ are subcomplexes of $\cL^{n,d}_\bullet$.    
Clearly, these are complexes of $\fS_{n-1}$-modules.

Set $R':=K[x_1, \ldots, x_{n-1}]$. 
Then $\cL^{n-1,d}_\bullet$ and $\cL^{n-1,d-1}_\bullet$ are chain complexes of graded free $R'$-modules. 

\begin{lem}\label{ABC}
With the above notation, we have the following. 
\begin{itemize}
\item[(1)] $A_\bullet \cong \cL^{n-1,d}_\bullet \otimes_{R'} R$ 
\item[(2)] $B_\bullet/A_\bullet$ is isomorphic to a truncation of $\cL^{n-1,d-1}_\bullet(-1) \otimes_{R'} R$, where  $(-1)$ denotes the degree shift for graded modules.   More precisely, if we remove  the ``last term" $\cL^{n-1, d-1}_{n-2d+1}(-1) \otimes_{R'} R$ from  $\cL^{n-1,d-1}_\bullet(-1) \otimes_{R'} R$, it is isomorphic to $B_\bullet/A_\bullet$.  
\item[(3)] $\cL^{n,d}_\bullet/B_\bullet \cong \cL^{n-1,d}_{\bullet-1}(-1) \otimes_{R'} R$.  
\end{itemize}
\end{lem}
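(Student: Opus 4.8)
The plan is to realize all three (iso)morphisms by a single device, namely \emph{deleting the box occupied by $n$}, which in each case sits at a prescribed removable corner, and then to check that this deletion intertwines the differentials. I would begin with the purely bookkeeping part: for each of (1)--(3), write out the term in homological degree $i$ on both sides and match them as graded $R$-modules. Using the branching decomposition displayed just before the lemma together with the identities $U_i\cong V_{(n-d-i-1,d,1^i)}$, $W_i/U_i\cong V_{(n-d-i,d-1,1^i)}$ and $V_i/W_i\cong V_{(n-d-i,d,1^{i-1})}$, and tracking the degree shifts (an internal shift $(-1)$ appears in (2) and (3), and a homological shift $\bullet-1$ in (3)), each comparison reduces to an equality of partitions and of twists. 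The index ranges also show that $B_\bullet/A_\bullet$ stops one step earlier than $\cL^{n-1,d-1}_\bullet(-1)\otimes_{R'}R$, which is precisely the truncation asserted in (2). In each case the comparison map is the $R$-linear extension of the inverse of the relevant $\fS_{n-1}$-isomorphism, i.e.\ the map sending (the class of) $e(T)\otimes f$ to $e(T')\otimes f$, where $T'$ is $T$ with its $n$-box removed.

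The real work is compatibility with the differentials, and the three cases behave differently. For (3) I would argue as follows: if $T$ represents a class in $V_i/W_i$, its $n$-box sits at the foot of the first column, so in $\partial^{n,d}_i(e(T)\otimes 1)=\sum_j(-1)^{j-1}e(T^j)\otimes x_{a_j}$ the single summand that moves $n$ itself (the term $j=i+2$, with $a_{i+2}=n$) produces a tableau whose $n$-box has migrated to the end of the first row; that polytabloid lies in $U_{i-1}\subset W_{i-1}$ and hence dies in the quotient. Every remaining summand keeps $n$ at the foot of the first column, and since $n$ never enters the first row, deletion of $n$ commutes with $T\mapsto T^j$ on the nose, so the induced differential is carried to $\partial^{n-1,d}_{i-1}$. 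Case (2) is similar but cleaner: there the $n$-box sits at the end of the second row, every summand leaves it there, none dies, and box-deletion reproduces $\partial^{n-1,d-1}_i$ term by term.

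Case (1) is where I expect the only genuine obstacle. For $T\in U_i$ the $n$-box is at the \emph{end} of the first row, so moving $a_j$ to the end of the first row deposits $a_j$ \emph{after} $n$; the resulting $T^j$ no longer has $n$ in its last box, and a priori it is not even clear that $e(T^j)\in U_{i-1}$. The resolution I would use is the elementary fact that a polytabloid is unchanged under the transposition of two entries lying in the \emph{same row} and each occupying a \emph{column of length one}: such a transposition fixes both the row-tabloid and the unordered family of columns, hence fixes $e(\,\cdot\,)$. In $T^j$ the entries $n$ and $a_j$ occupy the last two boxes of the first row, and throughout the range of $A_\bullet$ one has $n-d-i>d$, so the two columns containing them have length one; transposing $n$ and $a_j$ restores $n$ to the final box and identifies $e(T^j)$ with the box-deletion image $e(\,\widehat{\hat T^{\,j}})$, where $\hat T^{\,j}$ is produced by $\partial^{n-1,d}_i$ applied to $\hat T:=T$ with its $n$-box removed. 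This one identity simultaneously shows that $\partial^{n,d}$ preserves $A_\bullet$ and that the deletion map is a chain isomorphism onto $\cL^{n-1,d}_\bullet\otimes_{R'}R$. Finally I would record the sign reconciliation forced by the homological shift in (3), absorbing a factor $(-1)^i$ into the comparison map if the paper's convention for $\cL_{\bullet-1}$ carries a sign, after which all three chain isomorphisms are complete.
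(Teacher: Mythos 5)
Your proposal is correct and follows essentially the same route as the paper: identify the terms of $A_\bullet$, $B_\bullet/A_\bullet$ and $\cL^{n,d}_\bullet/B_\bullet$ via the branching-rule isomorphisms for $U_i$, $W_i/U_i$, $V_i/W_i$, and check that deleting the box of $n$ intertwines the differentials, with the one summand that moves $n$ out of the first column dying in the quotient in case (3). The only difference is one of detail: where the paper says the differentials ``clearly coincide'' in cases (1) and (2), you supply the justification it leaves implicit for (1) --- namely that a polytabloid is unchanged by transposing two entries in the same row whose columns have length one (applicable since $n-d-i>d$ throughout the range where $A_\bullet$ is nonzero), so $e(T^j)$ equals the polytabloid with $n$ restored to the final box.
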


\begin{proof}
(1) We have $A_i \cong V_{(n-d-i-1, d,1^i)} \otimes_K R(-d-i) \cong \cL_i^{n-1, d} \otimes_{R'} R$ 
for $0 \le i \le n-2d-1$, and $A_i=\cL_i^{n-1, d} \otimes_{R'} R=0$ for $i \ge n-2d$.  
Under these isomorphisms, the differential maps clearly coincide.  

(2) We have 
$$\cL_i^{n-1, d-1} \otimes_{R'} R\cong V_{(n-d-i,d-1,1^i)}\otimes_K R(-d-i+1)$$
for $0 \le i \le (n-1)-2(d-1)=n-2d+1$. Similarly, 
$$B_i/A_i \cong V_{(n-d-i, d-1, 1^i)}\otimes_K R(-d-i)$$
for $0 \le i  \le n-2d$ (note that $\cL_i^{n,d}=0$ for $i > n-2d$). The differential maps clearly coincide, and (2) holds.  

(3) We have $\cL^{n,d}_0/B_0=0$ and 
$$\cL^{n,d}_i/B_i \cong (V_i/W_i)\otimes_K R(-d-i) \cong V_{(n-d-i, d,1^{i-1})} \otimes_K  R(-d-i),$$
for $1 \le i \le n-2d$. Similarly, 
$$ \cL^{n-1,d}_{i-1} \otimes_{R'} R \cong V_{(n-d-i, d,1^{i-1})} \otimes_K  R(-d-i+1)$$
for $1 \le i \le n-2d$.  Let $\overline{e}(T)$ be an element of the basis \eqref{basis of V/W} of $V_i/W_i$. 
If the first column of $T$ has the entries $a_1,a_2, \ldots, a_{i+2} \, (=n)$ from top to bottom, we have 
$$\partial_i^{\cL^{n,d}_\bullet}(e(T) \otimes 1)=\sum_{j=1}^{i+2}(-1)^{j-1}(e(T^j) \otimes x_{a_j}).$$
Since $e(T_{i+2}) \in W_{i-1}$ and $\overline{e}(T_{i+2}) =0$,  we have 
$$\partial_i^{\cL^{n,d}_\bullet/B_\bullet}(\overline{e}(T) \otimes 1)=\sum_{j=1}^{i+1}(-1)^{j-1}(\overline{e}(T^j) \otimes x_{a_j}).$$
This clearly coincides with $\partial_{i-1}^{\cL^{n-1,d}_\bullet \otimes_{R'} R}$, and hence (3) holds. 
\end{proof}  

\noindent{\it The proof of Theorem~\ref{main}.} 
  We use induction on $n$.  Since the theorem has been shown for $d \le 2$ in \cite{SY2}, the theorem holds for small $n$'s. 
We may also assume that $n > 2d$ by Proposition~\ref{(d,d)}. 

Clearly, $A_i=0$ for $i  \ge  n-2d$. By the induction hypothesis and Lemma~\ref{ABC}, we have  
\begin{itemize}
\item $\cF_\bullet^{(n-d-1,d)}$ (resp. $\cF_\bullet^{(n-d,d-1)}$) is a minimal free resolution of the Specht ideal $\ISp_{(n-d-1,d)}$ (resp. $\ISp_{(n-d,d-1)}$) of $R'$. 
\item $A_\bullet$ is the first $n-2d-1$ steps of  $\cF_\bullet^{(n-d-1,d)} \otimes_{R'} R$. In particular,  $H_i(A_\bullet)=0$ for $0<i  < n-2d-1$.   
\item  $B_\bullet/A_\bullet$ is the first $n-2d$ steps of  $\cF_\bullet^{(n-d,d-1)}(-1) \otimes_{R'} R$. In particular, 
$H_i(B_\bullet/A_\bullet)=0$ for $0<i <n-2d$.  (We have $H_i(\cL_\bullet^{n-1, d-1})=0$ for $0<i \le n-2d$, but we have to exclude the case $i=n-2d$ by the truncation). 
\end{itemize}

First, we will study $H_i(B_\bullet)$ using the exact sequence 
\begin{equation}\label{AB}
0 \too A_\bullet \too B_\bullet \too B_\bullet/A_\bullet \too 0. 
\end{equation}

\medskip

\noindent{\it Claim~1.} If $n>2d+1$, then $H_i(B_\bullet) =0$ for $0 <i < n-2d$.

\medskip

\noindent{\it The proof of Claim~1.}  
By \eqref{AB} and the above observation, we have $H_i(B_\bullet)=0$ for $0 <i <n-2d-1$, and it remains to prove  $H_{n-2d-1}(B_\bullet)=0$. Since $A_{n-2d}=0$, we have  the exact sequence 
\begin{equation}\label{middle length}
0 \too  H_{n-2d}(B_\bullet) \stackrel{f}{\too} 
H_{n-2d}(B_\bullet /A_\bullet) \stackrel{g}{\too}H_{n-2d-1}(A_\bullet) \stackrel{h}{\too} H_{n-2d-1}(B_\bullet)\too 0. 
\end{equation}

By the 
above observation, 
$$H_{n-2d-1}(A_\bullet)=\Ker(\partial_{n-2d-1}^{A_\bullet}) = \Ker(\partial_{n-2d-1}^{\cF^{(n-d-1,d)}_\bullet})\otimes_{R'} R= \Image (\partial^{\cF^{(n-d-1,d)}_\bullet}_{n-2d}) \otimes_{R'} R.$$
Since  $\cF^{(n-d-1,d)}_{n-2d} \otimes_{R'} R \cong V_{((d-1)^2, 1^{n-2d+1})} \otimes_K R(-n+d-1)$, 
$H_{n-2d-1}(A_\bullet)$ is generated by its degree $(n-d+1)$-part  $$[H_{n-2d-1}(A_\bullet)]_{n-d+1} \cong V_{((d-1)^2, 1^{n-2d+1})}.$$

On the other hand, since the ``tail" of $B_\bullet/A_\bullet$ is of the form 
$$0 \too \cL_{n-2d}^{n-1,d-1}(-1)  \otimes_{R'} R \too \cL_{n-2d-1}^{n-1,d-1}(-1)  \otimes_{R'} R$$ 
by Lemma~\ref{ABC} (2) (while $\cL_{n-2d+1}^{n-1,d-1} \ne 0$  and $H_{n-2d}(\cL_\bullet^{n-1,d-1})=0$), we have 
$$H_{n-2d}(B_\bullet / A_\bullet) \cong \Ker (\partial^{\cL^{n-1, d-1}_\bullet}_{n-2d})(-1) \otimes_{R'} R \cong \Image (\partial^{\cL^{n-1, d-1}_\bullet}_{n-2d+1})(-1) \otimes_{R'} R.$$ 
Since $\cL^{n-1, d-1}_{n-2d+1}  \otimes_{R'} R \cong V_{((d-1)^2, 1^{n-2d+1})} \otimes_K R(-n+d)$, we have 
 $$[H_{n-2d}(B_\bullet / A_\bullet)]_{n-d+1} \cong V_{((d-1)^2, 1^{n-2d+1})}.$$

Since, in the sequence \eqref{middle length}, 
\[
  \begin{CD}
     [g]_{n-d+1}: \, @. [H_{n-2d}(B_\bullet / A_\bullet)]_{n-d+1}  @>>>   [H_{n-2d-1}(A_\bullet)]_{n-d+1} \\
 @.   \cong @|   \cong @| \\
 @.  V_{((d-1)^2, 1^{n-2d+1})}   @. V_{((d-1)^2, 1^{n-2d+1})} 
  \end{CD}
\]
is an $\fS_{n-1}$-homomorphism, and $H_{n-2d-1}(A_\bullet)$ is generated by $[H_{n-2d-1}(A_\bullet)]_{n-d+1}$, we have 
\begin{eqnarray*}
[g]_{n-d+1} \ne 0 \Longrightarrow \text{$[g]_{n-d+1} $ is bijective} &\Longrightarrow& 
\text{$g$ is surjective} \\
 &\Longrightarrow& \text{$h$ is injective} \Longrightarrow  H_{n-2d-1}(B_\bullet) =0.
\end{eqnarray*}
 So it suffices to show that $ [g]_{n-d+1} \ne 0$. 

Since $A_{n-2d}=B_{n-2d+1}=0$, we have 
$$H_{n-2d}(B_\bullet)=\Ker \partial^{B_\bullet}_{n-2d} \qquad \text{and} \qquad H_{n-2d}(B_\bullet/A_\bullet) 
=(\partial^{B_\bullet}_{n-2d})^{-1}(A_{n-2d-1})$$ 
(both are submodules of $B_{n-2d}$), and $f$ is the inclusion map. 

As defined in the construction of the differential map of $\cF_\bullet^{(n-d,d)}$, 
for a tableau $$T=\ytableausetup
{mathmode, boxsize=2.2em}
\begin{ytableau}
a_1 & b_2 & \cdots & b_{d-1} \\
a_2 & c_2 & \cdots & c_{d-1} \\
a_3 \\
\vdots 
\end{ytableau}  \in \Tab((d-1)^2, 1^{n-2d+1})$$
we have  the tableau $T^i  \in \Tab(d, d-1, 1^{n-2d})$ for each $1 \le i \le n-2d+3$.   
Adding \fbox{$n$} to the right end of the second row of $T^i$, we get $\widetilde{T}^i \in \Tab(d^2,1^{n-2d})$. 
Set 
$$
\alpha  := \sum_{i \ge 1} (-1)^{i-1}e(\widetilde{T^i}) \otimes x_{a_i} \in [W_{n-2d} \otimes_{R'} R(-n+d)]_{n-d+1} =[B_{n-2d}]_{n-d+1}.  
$$

We want to compute $\partial^{B_\bullet}_{n-2d}(\alpha)$.   If $T',T'',T''' \in \Tab(d+1, d,1^{n-2d-1})$ are tableaux whose  right most 3 entries are  
$$
T'=
\ytableausetup
{mathmode, boxsize=2em}
\begin{ytableau}
\none[\cdots] &  a_i & a_j\\
\none[\cdots]  &  n   
\end{ytableau},
\qquad 
T''=
\ytableausetup
{mathmode, boxsize=2em}
\begin{ytableau}
\none[\cdots]   & a_j & a_i \\
\none[\cdots]   & n   
\end{ytableau}
\qquad 
\text{and}
\qquad
T'''=
\ytableausetup
{mathmode, boxsize=2em}
\begin{ytableau}
\none[\cdots]   & a_i & n \\
\none[\cdots]   & a_j  
\end{ytableau},
$$
and the rests are same, then $e(T')=e(T'')+e(T''')$ and $e(T''') \in A_{n-2d-1}$. Hence it is easy to see that
$\partial^{B_\bullet}_{n-2d}(\alpha) \in  A_{n-2d-1}$ and $\alpha \in H_{n-2d}(B_\bullet/A_\bullet)$. 
Moreover, the term  
$$e\Biggl( \ \ytableausetup
{mathmode, boxsize=2.2em}
\begin{ytableau}
a_3 & b_2 & \cdots & b_{d-1} & a_1 & n\\
a_4 & c_2 & \cdots & c_{d-1}  & a_2 \\
\vdots 
\end{ytableau} \ \Biggr)
\otimes x_{a_1}x_{a_2}
$$
survives in $\partial^{B_\bullet}_{n-2d}(\alpha)$. Hence $\partial^{B_\bullet}_{n-2d}(\alpha) \ne 0$, and it  means that $\alpha \not \in H_{n-2d}(B_\bullet)=\Im f= \Ker g$ and $g(\alpha) \ne 0$. 
We are done. \qed

\bigskip

\noindent{\it Claim~2.} If $n=2d+1$, then $[H_1(B_\bullet)]_{d+2} = 0$. 

\medskip

\noindent{\it The proof of Claim~2.}  By the same argument as in Claim~1, we have an injection $f : H_1(B_\bullet) \to H_1(B_\bullet/A_\bullet)$, and we see that $[H_1(B_\bullet/A_\bullet)]_{d+2} \cong V_{((d-1)^2,1^2)}$. 
Since $ V_{((d-1)^2,1^2)} $ is a simple $\fS_{n-1}$-module, and $f$ is an $\fS_{n-1}$-homomorphism, 
the degree $(d+2)$-part $[f]_{d+2}$ is either  a bijection or the zero map (then  $[H_1(B_\bullet)]_{d+2}=0$).  So it suffices to show that $[H_1(B_\bullet)]_{d+2} \ne [H_1(B_\bullet/A_\bullet)]_{d+2}$. 

As in Claim~1, we have $H_1(B_\bullet)=\Ker \partial_1^{B_\bullet}$ and $H_1(B_\bullet/A_\bullet)=(\partial_1^{B_\bullet})^{-1}(A_0)$. So it suffices to show that there is some 
$\alpha \in [B_1]_{d+2}$ with $0 \ne \partial^{B_\bullet}_1(\alpha) \in A_0$. However, we can find such $\alpha$ in the same way as in Claim~1. 
\qed

\medskip

Let us return to the main line of the proof. The proof is divided into two cases. In both cases, we use the exact sequence 
\begin{equation}\label{B-L}
0 \too B_\bullet \too \cL^{n,d}_\bullet \too \cL^{n,d}_\bullet/B_\bullet \too 0. 
\end{equation}

\noindent{\it Case 1.} First, we consider the case $n > 2d+1$. It suffices to prove that  $H_i(\cL^{n,d}_\bullet)=0$ for $0<i <n-2d$ by Proposition~\ref{reduction}.  
 By Lemma~\ref{ABC} (3) and the induction hypothesis, we have $H_i(\cL^{n,d}_\bullet/B_\bullet)=0$ for $1<i <n-2d$. 
By \eqref{B-L}, Claim~1 and the above observation, we have $H_i(\cL^{n,d}_\bullet)=0$ for $1<i <n-2d$, and it remains to show that  $H_1(\cL^{n,d}_\bullet)=0$.  By Lemma~\ref{ABC} (3) and the induction hypothesis, we have $H_1(\cL^{n,d}_\bullet/B_\bullet) \cong H_0(\cL^{n-1, d}_\bullet)(-1) \otimes_{R'} R \cong (\ISp_{(n-d-1,d)})(-1) \otimes_{R'} R \cong (\ISp_{(n-d-1,d)}R)(-1)$, where $\ISp_{(n-d-1,d)}R$ is the extension of the ideal  $ \ISp_{(n-d-1,d)} \subset R'$ to $R$.  Hence we have the following exact sequence 
$$0 \too H_1( \cL^{n,d}_\bullet ) \stackrel{\varphi}{\too}  (\ISp_{(n-d-1,d)}R)(-1) \stackrel{\psi}{\too} H_0(B_\bullet) \too H_0(\cL^{n,d}_\bullet) \too 0.$$
By Lemma~\ref{ABC} (1), (2) and the induction hypothesis, $H_0(A_\bullet) \cong \ISp_{(n-d-1, d)}R$ and $H_0(B_\bullet/A_\bullet)  \cong (\ISp_{(n-d, d-1)}R)(-1)$. 
Since $H_1(B_\bullet/A_\bullet)=0$ now, \eqref{AB} yields 
$$0 \too \ISp_{(n-d-1, d)}R \too H_0(B_\bullet) \too (\ISp_{(n-d, d-1)}R)(-1) \too 0,$$
and hence $\Ass_R (H_0(B_\bullet)) =\{ (0) \}$. Note that $\fS_n$ acts on $H_1( \cL^{n,d}_\bullet )$, but does not act on $\ISp_{(n-d-1,d)}R$.  So $\varphi$ cannot be an isomorphism.  
Since $\ISp_{(n-d-1,d)}R \subset R$,  $H_1( \cL^{n,d}_\bullet ) \ne 0$ implies that $\Image \psi=\Cok \varphi$ has an associated prime $\fp \ne (0)$. Since $\Image \psi$ is a submodule of $H_0(B_\bullet)$, $\fp$ is also an associated prime of $H_0(B_\bullet)$, and this is a contradiction. 
Hence we have $H_1( \cL^{n,d}_\bullet ) = 0$, and we are done. 
\medskip

\noindent{\it Case 2.} Finally, we consider the case $n = 2d+1$. It suffices to show that $[H_1(\cL^{n,d}_\bullet)]_{d+2}=0$ by Proposition~\ref{reduction2}. By  Claim~2, the exact sequence \eqref{B-L} yields  
$\varphi : H_1( \cL^{n,d}_\bullet ) \to H_1(\cL^{n,d}_\bullet/ B_\bullet)$ such that 
$$[\varphi]_{d+2} : [H_1( \cL^{n,d}_\bullet )]_{d+2} \too [H_1(\cL^{n,d}_\bullet/ B_\bullet)]_{d+2}$$
is injective.  Recall that $H_1(\cL^{n,d}_\bullet/ B_\bullet) \cong H_0(\cL_\bullet^{n-1,d} ) (-1) \otimes_{R'} R$. 
Now $\cF_\bullet^{(n-d-1,d)} \, (=\cF_\bullet^{(d,d)})$ is a minimal free resolution of $\ISp_{(d,d)} \subset R'$, $\cF_0^{(d,d)} \cong \cL_0^{n-1,d}$ and $\cL_i^{n-1,d}=0$ for $i >0$, hence we have 
$$H_1(\cL^{n,d}_\bullet/ B_\bullet) \cong H_0(\cL_\bullet^{n-1,d})\otimes_{R'} R =\cL_0^{n-1,d}\otimes_{R'} R=\cF_0^{(d,d)}\otimes_{R'} R.$$  
Since there is a graded submodule $N$ of $\cF_0^{(d,d)}$ such that $\cF_0^{(d,d)}/N \cong \ISp_{(d,d)}$ and $N$ is generated by $N_{d+2}$, there is a submodule $L \, (\cong N(-1) \otimes_{R'} R)$ of $H_1(\cL^{n,d}_\bullet/B_\bullet)$ such that $L$ is generated by $L_{d+3}$ and 
$$H_1(\cL^{n,d}_\bullet/B_\bullet)/L \cong \ISp_{(d, d)}(-1) \otimes_{R'} R \cong (\ISp_{(d, d)}R)(-1).$$ 
If  $[H_1(\cL^{n,d}_\bullet)]_{d+2} \ne 0$, then we have a  subideal $J \subset \ISp_{(d, d)}R$ with 
$$J(-1) \cong \Image \varphi/ (L \cap \Image \varphi)$$ 
and  $J_{d+1} \ne 0$. 
Since $\fS_n$ acts on $J_{d+1} \, (\cong [\Image \varphi]_{d+2} \cong [H_1(\cL^{n,d}_\bullet)]_{d+2})$, the ideal $J' := (J_{d+1})$ is a symmetric ideal.   
For  a subsets $F \subset [n-1]$ (resp. $F \subset [n]$), set 
$$P_F' := (x_i-x_j \mid i,j \in F) \subset R' \quad \text{(resp.}  \, P_F := (x _i-x_j \mid i,j \in F) \subset R \,)$$
to be the prime ideal. 
Since 
$$\ISp_{(d,d)}=\bigcap_{\substack{F \subset [n-1] \\ \# F =d+1}}P'_F,$$
we have 
$$\ISp_{(d,d)}R=\bigcap_{\substack{F \subset [n-1] \\ \# F =d+1}}P_F.$$
Since $J' \subset \ISp_{(d,d)}R$ is a symmetric ideal, we have 
$$J' \subset \bigcap_{\substack{F \subset [n] \\ \# F =d+1}}P_F=\ISp_{(d,d,1)}.$$
However, we have $J_{d+1}=J'_{d+1} \subset [\ISp_{(d,d,1)}]_{d+1}=0$. This is a contradiction. 

\qed

\section*{Acknowledgements} 
We are grateful to Professors Stephen Griffeth,  Steven V Sam and Alexander Woo for giving us valuable information on the preceding works on Specht ideals, especially \cite{BGS}. We also thank Professors Mitsuyasu Hashimoto and  Satoshi Murai for stimulating discussion. Finally, we thank the anonymous reviewer for his/her careful reading of our manuscript and valuable comments. 


\begin{thebibliography}{8}
\bibitem{BGS}
 C. Berkesch Zamaere, S. Griffeth, and S.V. Sam, Jack polynomials as fractional quantum
Hall states and the Betti numbers of the $(k + 1)$-equals ideal, Comm. Math. Phys. {\bf  330} (2014), 415--434.

\bibitem{BH}
 W. Bruns and J. Herzog, Cohen-Macaulay rings, rev. ed., Cambridge Studies in Advanced Mathematics {\bf 39}, 1998.

\bibitem{E}D. Eisenbud, Commutative Algebra with a view toward algebraic geometry, Graduate texts in Mathematics 150, Springer-Verlag, 1995.  




\bibitem{LL}
S.-Y.R.~Li and W.C.W.~Li, 
Independence numbers of graphs and generators of ideals, \textit{Combinatorica} \textbf{1} (1981) 55--61.

\bibitem{MW} C. McDaniel and J. Watanabe, 
Principal radical systems, Lefschetz properties and perfection of Specht Ideals of two-rowed partitions, \textit{Nagoya Math. J.} \textbf{247} (2022) 690--730.   



\bibitem{Sa} B.E. Sagan, The Symmetric Group, Representations, Combinatorial Algorithms, and Symmetric Functions, second edition, Graduate texts in Mathematics 203, Springer-Verlag,  2001.

\bibitem{SY1} K. Shibata and K. Yanagawa, Regularity of Cohen-Macaulay Specht ideals, \textit{J. Algebra} \textbf{582} (2021), 73--87. 

\bibitem{SY2}
K.~Shibata and K.~Yanagawa,
Elementary construction of minimal free resolutions of the Specht ideals of shapes $(n-2,2)$ and $(d,d,1)$, to appear in \textit{J. Algebra its Appl.}, 
arXiv:2010.06522.




\bibitem{Va} W. V. Vasconcelos, Computational Methods in Commutative Algebra and Algebraic Geometry, Springer--Verlag, 1998.


\bibitem{V} R. H. Villarreal, Monomial algebras, Second edition, Monographs and research notes in mathematics, CRC Press, Boca Raton FL, 2015.

\bibitem{WY} J. Watanabe and K. Yanagawa, Vandermonde determinantal ideals, Math. Scand. {\bf 125} (2019),  179--184. 

\bibitem{Y} K. Yanagawa, When is a Specht ideal Cohen-Macaulay?   J. Commut. Algebra {\bf 13} (2021), 589--608. 
\end{thebibliography}
\end{document}